\theoremstyle{theorem}
\newtheorem{thm}{Theorem}[section]
\newtheorem{cor}[thm]{Corollary}
\newtheorem{lema}[thm]{Lemma}
\newtheorem{obs}[thm]{Proposition}
\newtheorem*{tthm}{Main Theorem}
\newtheorem{problem}[thm]{Problem}
\newtheorem{pr}{Algorithm}
\newtheorem{fact}[thm]{Fact}
\theoremstyle{definition} 
\theoremstyle{definition}  %
 \numberwithin{equation}{section}
 \theoremstyle{definition}
\newtheorem{con}[thm]{Conjecture}
\newtheorem{df}[thm]{Definition}
\newtheorem{exm}[thm]{Example}
\newtheorem{rem}[thm]{Remark}
\newtheorem{nots}[thm]{Notations}
\def\P{\mathbb{P}}
\def\Z{\mathbb{Z}}
\def\N{\mathbb{N}}
\def\C{\mathbb{C}}
\def\R{\mathbb{R}}
\def\Q{\mathbb{Q}}
\def\o{\mathcal{O}}
\DeclareMathOperator{\Spec}{Spec}
\DeclareMathOperator{\rank}{rank}
\def\CC{\mathbb{C}}
\def\ob{\begin{obs}}
\def\kob{\end{obs}}
\def\dow{\begin{proof}}
\def\kdow{\end{proof}}
\def\tw{\begin{thm}}
\def\ktw{\end{thm}}
\def\hip{\begin{con}}
\def\khip{\end{con}}
\def\lem{\begin{lema}}
\def\klem{\end{lema}}
\def\ex{\begin{exm}}
\def\prog{\begin{pr}}
\def\kprog{\end{pr}}
\def\wn{\begin{cor}}
\def\kwn{\end{cor}}
\def\uwa{\begin{rem}}
\def\kuwa{\end{rem}}
\def\kex{\end{exm}}
\def\dfi{\begin{df}}
\def\kdfi{\end{df}}
\def\fa{\begin{fact}}
\def\kfa{\end{fact}}
\title[tangential varieties
of the Segre--Veronese Varieties]{Cohen--Macaulay and Gorenstein tangential varieties\\
of the Segre--Veronese Varieties}
\author{M. Azeem Khadam}
\address{Institut f\"ur Mathematik und wissenschaftliches Rechnen, Karl-Franzens-Universit\"at Graz, NAWI Graz, Heinrichstraße 36, 8010 Graz, Austria}
\email{azeem.khadam@uni-graz.at}
\author{Martin Vodi\v{c}ka}
\address{Max Planck Institute for Mathematics in the Sciences, Inselstrasse 22, 041 03 Leipzig, Germany and University of Konstanz, Fachbereich Mathematik und Statistik, Konstanz, Germany}
\email{martin.vodicka@uni-konstanz.de}
\begin{document}
	
\subjclass[2020]
{14M25; 14M17; 13P25}
\keywords{tangential variety, Segre--Veronese embedding, simplicial complex, cumulants, Cohen--Macaulay variety, Gorenstein variety, non normal variety}

\begin{abstract}
	We classify the tangential varieties of the Segre--Veronese varieties which are Cohen--Macaulay or Gorenstein.
\end{abstract}

\maketitle

\section{Introduction}
Let $\P^N$ be the projective space over the complex ground field $\C$ and $X \subset \P^N$ a projective variety. We define the \emph{tangential variety} $\tau(X)$ of $X$ as the union of all the tangent lines to $X$. Likewise the \emph{(second) secant variety} $\sigma_2(X)$ is defined as the union of all the secant lines to $X$ together with all the points lying on $\tau(X)$. In this article our focus will be on tangential varieties of the Segre--Veronese varieties, {\bf so throughout this article we assume that $X$ is the Segre--Veronese variety}.

Investigating tangential and secant varieties is a part of classical algebraic geometry which was studied, among others, by Terracini and were brought into a modern light by F.L.~Zak~\cite{zak2005tangents}. One of the most basic questions is about the dimension, which can, in most of the cases, be calculated by Terracini Lemma (see \cite{terracini1911sulle} for the original statement and \cite{aadlandsvik1987joins, dale1984terracini} for modern versions). Another basic question is to provide a complete list of generators of the ideal of the tangential and secant varieties. In \cite{raicu2012secant} Raicu solved this problem for secant varieties and in \cite{oeding2014tangential} Oeding and Raicu obtained the analogous results for tangential varieties. In both cases their methods were based on representation theory.

Furthermore, $\sigma_2(X)$ is known to be normal \cite[Theorem 2.2]{vermeire2009singularities} and Cohen--Macaulay (which followed from \cite[Proposition 3.5]{khadam2020secant}). On the other hand, the tangential variety $\tau(X)$ is not always normal \cite[Example 2.22]{michalek2018flexible}. This is one of the crucial reason that investigating Cohen--Macaulay and Gorenstein properties of $\tau(X)$ was never easy and it remained an open problem to classify those tangential varieties of the Segre--Veronese varieties which are Cohen--Macaulay or Gorenstein. In \cite[Theorem 4.4]{khadam2020secant} the first author along with Mateusz Micha\l ek and Piotr Zwiernik classified those secant varieties of the Segre--Veronese varieties which are Gorenstein. In a special case the tangential variety $\tau(X)$ and the secant variety $\sigma_2(X)$ coincide with that of the locus of matrices of rank at most two. That is, its classification of Cohen--Macaulay or Gorenstein was classically known (see Remark~\ref{literature} for references and a discussion).

The purpose of this article is to present the complete classification of those tangential varieties of the Segre--Veronese varieties which are Cohen--Macaulay or Gorenstein. Here we state the main theorem of this article (see Theorem~\ref{cohen-macaulay-goren} for the proof). To this end, fix $k\in \N$ a positive integer and $\mathbf a,\mathbf b\in \N^k$ where $\mathbf a=(a_1,\ldots,a_k), \mathbf b=(b_1,\ldots,b_k)$ such that $a_i, b_i$ are positive integers. Let $X$ be the corresponding Segre--Veronese variety, i.e. the embedding of $\P^{b_1}\times\dots\times\P^{b_k}$ into $\P^N$ given by the very ample line bundle $\o(a_1,\ldots,a_k)$. 

\begin{tthm}
	The tangential variety of the Segre--Veronese variety is smooth if and only if
	\begin{itemize}
		\item [(S1)] $k=2$, $\mathbf a=(1,1)$, $b_1=1$, or
		\item [(S2)] $k=1$, $a=1$ or ($a=2$ and $b=1$).
	\end{itemize}
	
	If the tangential variety of the Segre--Veronese variety is not smooth, then it is Cohen--Macaulay if and only if one of the following holds
	\begin{itemize}
		\item [(CM1)] $k\geq3$, $\mathbf a=(1,\ldots,1)$,
	
		\item [(CM2)] $k=2$, $\mathbf a=(2,2)$, $\mathbf b=(1,1)$,
		
		\item [(CM3)] $k=2$, $\mathbf a=(1,2)$, $\mathbf b=(1,b_2)$ for all $b_2\geq 1$,
		
		\item [(CM4)] $k=2$, $\mathbf a=(1,1)$, $b_i>1$ for all $i=1,2$,
		
		\item [(CM5)] $k=1$, $a\geq3$, $b=1$,
		
		\item [(CM6)] $k=1$, $a=2$, $b>1$.
	\end{itemize}

	If the tangent variety of the Segre--Veronese variety is not smooth, then it is Gorenstein if and only if one of the following holds
	\begin{itemize}
		\item [(G1)] $k=3$, $\mathbf a=(1,1,1)$, $\mathbf b=(1,1,1)$,
		
		\item [(G2)] $k=2$, $\mathbf a=(1,2)$, $\mathbf b=(1,1)$,
		
		\item [(G3)] $k=2$, $\mathbf a=(1,1)$, $b_1=b_2$, $b_1>1$.
		
		\item [(G4)] $k=1$, $a\geq3$, $b=1$,
		
		\item [(G5)] $k=1$, $a=2$, $b$ is even.
	\end{itemize}
\end{tthm}

Note that the tangential variety is normal for the cases (S1 - S2). Other cases when the tangential variety is normal fall inside (CM1 - CM6). Precisely, the tangential variety of the Segre--Veronese variety is normal only in the following cases:
\begin{itemize}
	\item [(N1)] $k\geq1$, $\mathbf a=(1,\ldots,1)$ - the Segre case (cf. \cite[Proposition 8.5]{moz}),
	
	\item [(N2)] $k=1$, $a=2$, $b$ is arbitrary - the special Veronese case (see Remark~\ref{literature}).
\end{itemize}
This means we have \emph{non} normal tangential varieties of the Segre--Veronese varieties which are Cohen--Macaulay but not Gorenstein or which are Gorenstein (and hence Cohen--Macaulay as well) or which are not Cohen--Macaulay (hence not Gorenstein). Likewise, we have normal tangential varieties of the Segre--Veronese varieties which are not Gorenstein but this fact was already known (for Segre case (N1) by \cite[Theorem 8.9]{moz} and for special Veronese case (N2) see Remark~\ref{literature} for several references).

A part of the research on the geometry of tangential varieties has also been motivated by applications. In \cite{oeding2011set}, Oeding pointed out applications of the tangential variety of an $n$-factor Segre where the equations allow one to answer the question of membership for the following sets: the set of tensors with border rank 2 and rank $k \leq n$ (the secant variety is stratified by such tensors \cite{ballico2013tensor}), a special Context-Specific Independence model, and a certain type of inverse eigenvalue problem.

The investigation of properties like normal, Cohen--Macaulay or Gorenstein for the tangential (and secant) varieties remains a hard problem. The techniques from algebraic statistics together with toric geometry recently, somehow, made it possible to study them (see \cite{sturmfels2013binary} for a seminal paper). Our main result is based on methods from algebraic statistics, in particular cumulants, and toric geometry. The main idea behind cumulants is to treat points of the variety as (formal) probability distributions and apply methods from algebraic statistics \cite{zwiernik2012cumulants, pwz}. Cumulants have already been applied successfully on several occasions \cite{khadam2020secant, manivel2015secants, moz, michalek2018flexible, sturmfels2013binary}.

A change of coordinates, inspired by cumulants, leads to new structures on secant and tangential varieties. In our setting, cumulant methods turn the tangential variety of the Segre--Veronese variety locally into a toric variety, although, in general, the tangential variety is not a toric variety. This will be done in Proposition~\ref{toric covering}. Since Cohen--Macaulay and Gorenstein are local properties and our main object is locally a toric variety, we next apply methods from toric geometry. \emph{However}, it is not as easy as it looks to be, since in our case the tangential variety is \emph{not} normal and hence using toric geometry is highly nontrivial here.

Note that, according to our best knowledge, cumulant methods have so far applied only to normal varieties (or to decide when a variety is normal) and then to use the normal toric machinery to investigate above mentioned properties. So to use toric geometry we do not rely on the classical definition of a toric variety, where, in addition, the variety needs to be normal; see \cite[Chapter 13]{sturmfelsks} for discussion. We, in particular, use a criterion of Cohen--Macaulay and Gorenstein developed by Hoa and Trung \cite[Theorem 4.1]{trung1986affine}, which is equally applicable to non normal toric varieties.

This article is organized as follows. The first half of Section 2 is mainly based on \cite{khadam2020secant} where we recall the background results needed later. In particular, we define toric varieties and simplicial embeddings. For more general references for this and others (Cohen--Macaulay, Gorenstein rings etc), we recommend \cite{bruns1998cohen, cox2011toric, michalek2019invitation, cca, sturmfelsks}. In the second half, we present and elucidate \cite[Theorem 4.1]{trung1986affine} by using a few examples. In section 3, we study the toric geometry of our varieties and present the main theorem.\\

\noindent{\sc Acknowledgments.}
The authors are thankful to the reviewer for useful comments and suggestions. We would also like to thank Mateusz Micha\l ek for his guidance, as well as for important remarks and useful discussions about this article.

\section{Background results}

In this section, we recall a few definitions and results which we need to prove the main results of this article. Our approach is to use methods from \cite{khadam2020secant} where the authors mainly deal with the secant varieties of the Segre-Veronese varieties. More details and examples can be found in \cite{khadam2020secant}. Let $\N$ denote the set of nonnegative integers.
\begin{df}\label{toric-simplicial}
	
	(a) (Toric variety)
	Let $\mathbf x=(x_1,\ldots,x_N)$ and $\mathbf t=(t_1,\ldots,t_n)$, and $\mathcal C=\{\mathbf c_1,\ldots,\mathbf c_N\}$ be a fixed subset of $\mathbb N^n$. The set $\mathcal C$ defines a map $e_{\mathcal C}$ from $\C^n$ to $\C^N$ where $x_{i}=\mathbf t^{\mathbf c_i}:=t_1^{c_{i1}}\cdots t_n^{c_{in}}$ for $1\leq i\leq N$. The closure of the image of this map $V_{\mathcal C}:=\overline{e_{\mathcal C}(\C^n)}$ is called an \emph{affine toric variety}. We note that this differs from the classical definition of a toric variety, where, in addition, the variety needs to be normal; see \cite[Chapter 13]{sturmfelsks} for discussion.

	(b) (Simplicial complex) A \emph{simplicial complex} $\Delta$ on the vertex set $\{1,\ldots,n\}$ is a collection of subsets, called \emph{simplices}, closed under taking subsets, that is, if $\sigma\in \Delta$ and $\tau\subset \sigma$ then $\tau\in \Delta$. A simplex $\sigma\in \Delta$ of cardinality $|\sigma|=i+1$ has \emph{dimension} $\dim(\sigma)=i$. In this paper we allow vertices to have repeated labels. In this case $\{1,\ldots,n\}$ always refers to the labelling set of $\Delta$ rather than its vertex set; see \cite[Example 2.1]{khadam2020secant} for an example.

\end{df}

By the standard construction a simplicial complex defines an affine toric variety.
Let $\Delta$ be a simplicial complex with vertices labelled by variables $\mathbf t=(t_1,\dots,t_n)$ (with possible repetitions). Suppose $\Delta$ contains $N$ distinct simplices. Then $\Delta$ induces an embedding $e_\Delta:\CC^n\rightarrow\CC^N$, where the coordinates of the codomain are indexed by $\sigma\in \Delta$, by 
$$	\mathbf t\mapsto \mathbf x=(x_\sigma)_{\sigma\in \Delta},\qquad x_\sigma=\prod_{i\in \sigma} t_i.$$
By convention, the monomial corresponding to the empty set is $x_\emptyset=1$. If two simplices have exactly the same labels, as multisets, we may identify them.
We define the variety $V_\Delta\;:=\;\overline{e_\Delta(\C^n)}$.  Denote by $\Delta_{\geq 2}$ the set of simplices in $\Delta$ of dimension at least one. The toric variety $T_\Delta$ associated with the embedding $e_\Delta$ is the affine toric variety in $\C^{N-n-1}$ obtained as the closure of the projection of $V_\Delta$ to the coordinates $x_\sigma$ with $\sigma\in \Delta_{\geq 2}$.

We define the \emph{tangential variety} $\tau(Y)$ of a variety $Y$ as the union of all tangent lines to $Y$. Note that the tangential variety $\tau(V_\Delta)$ is parametrized by $2n$ parameters $t_i$ and $u_i$ for $1\leq i\leq n$:
\[
x_\sigma=1/n \left(\sum_{i \in \sigma} u_i \prod_{j \in \sigma\setminus\{i\}}t_j + (n-\dim\sigma - 1) \prod_{j \in \sigma} t_j \right) \qquad\mbox{for all }{\sigma\in \Delta}.
\]

 The following is one of the main results from \cite{khadam2020secant}.
\begin{thm}\label{main-tangent}
	The tangential variety of $V_\Delta$ is isomorphic to the product of $\CC^n$ and the variety $T_{\Delta}$. 
\end{thm}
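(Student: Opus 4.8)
The plan is to parametrize $\tau(V_\Delta)$ explicitly and then to exhibit a polynomial automorphism of the ambient $\C^N$ carrying that parametrization onto the obvious parametrization of $\C^n\times T_\Delta$. First, observe that $V_\Delta$ is the graph of the monomial map $\mathbf t\mapsto(\mathbf t^\sigma)_{\sigma\in\Delta_{\geq 2}}$ over the singleton coordinates $x_{\{i\}}=t_i$; in particular $V_\Delta\cong\C^n$ is smooth, $e_\Delta$ is injective, and the affine tangent space of $V_\Delta$ at $e_\Delta(\mathbf t)$ is spanned by the partial derivatives $\partial e_\Delta/\partial t_j(\mathbf t)$. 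Hence $\tau(V_\Delta)$ is the Zariski closure of the image of
\[
\Phi\colon\C^n\times\C^n\to\C^N,\qquad \Phi(\mathbf t,\mathbf u)=e_\Delta(\mathbf t)+\sum_{j=1}^{n}u_j\,\frac{\partial e_\Delta}{\partial t_j}(\mathbf t),
\]
whose components are $\Phi_\emptyset=1$, $\Phi_{\{i\}}=t_i+u_i$, and $\Phi_\sigma=\mathbf t^\sigma+\sum_j m_j(\sigma)\,u_j\,\mathbf t^{\sigma\setminus j}$ for $\sigma\in\Delta_{\geq 2}$, where $m_j(\sigma)$ is the multiplicity of the label $j$ in $\sigma$ and $\sigma\setminus j$ removes one copy of $j$.

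The key computation is to rewrite $\Phi$ after the substitution $\mathbf z=\mathbf t+\mathbf u$. A direct expansion gives, for every $\sigma\in\Delta_{\geq 2}$,
\[
\Phi_\sigma \;=\; \sum_{R\subseteq\sigma}(-1)^{|R|}\,(1-|R|)\,\Big(\prod_i\binom{m_i(\sigma)}{m_i(R)}\Big)\,\mathbf z^{\sigma\setminus R}\,\mathbf u^{R},
\]
the sum running over sub-multisets $R$ of $\sigma$. Three features matter: the term $R=\emptyset$ equals $\mathbf z^\sigma=\prod_{i\in\sigma}\Phi_{\{i\}}$; \emph{every term with $|R|=1$ cancels}; and for $|R|\ge 2$ the scalar $(-1)^{|R|}(1-|R|)$ is nonzero, so in particular $\mathbf u^\sigma$ appears with the nonzero coefficient $(-1)^{|\sigma|}(1-|\sigma|)$. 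Because $\Delta$ is a simplicial complex, every $R\subseteq\sigma$ again belongs to $\Delta$, so each $\mathbf u^R$ with $|R|\ge 2$ occurring above equals $\mathbf u^\rho$ for some $\rho\in\Delta_{\geq 2}$. Solving the identity recursively on $|\sigma|$ therefore yields polynomials $U_\sigma\in\C[x_\rho:\rho\in\Delta]$ with $U_\sigma(\Phi(\mathbf t,\mathbf u))=\mathbf u^\sigma$.

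Now let $L\colon\C^N\to\C^N$ be the identity on the coordinates $x_\emptyset$ and $x_{\{i\}}$ and set $L(x)_\sigma=U_\sigma(x)$ for $\sigma\in\Delta_{\geq 2}$. Running the recursion the other way --- the same identity also writes each $\Phi_\sigma$ as a polynomial in the $\Phi_{\{i\}}$ and the $\mathbf u^\rho$, which provides the inverse --- shows that $L$ is a polynomial automorphism of $\C^N$. By construction $L\circ\Phi=\Psi$, where $\Psi(\mathbf t,\mathbf u)=\bigl(1,\ \mathbf t+\mathbf u,\ (\mathbf u^\tau)_{\tau\in\Delta_{\geq 2}}\bigr)$. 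For fixed $\mathbf u$ the value $\mathbf z=\mathbf t+\mathbf u$ still ranges over all of $\C^n$, while $\mathbf u\mapsto(\mathbf u^\tau)_{\tau\in\Delta_{\geq 2}}$ is precisely the monomial parametrization defining $T_\Delta$; hence $\overline{\Psi(\C^n\times\C^n)}=\{1\}\times\C^n\times T_\Delta$. Applying the automorphism $L$,
\[
\tau(V_\Delta)=\overline{\Phi(\C^n\times\C^n)}\;\cong\;\overline{\Psi(\C^n\times\C^n)}\;\cong\;\C^n\times T_\Delta .
\]

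The main obstacle is the combinatorial bookkeeping behind the displayed identity, especially when vertex labels are repeated: one has to track the multiplicities $m_j(\sigma)$ and the resulting multinomial coefficients carefully enough to confirm both the cancellation of the part of $\mathbf u$-degree one and the non-vanishing of the diagonal coefficient $(-1)^{|\sigma|}(1-|\sigma|)$ --- it is exactly these two facts that make the recursion, and hence $L$, well defined. Once $L$ is in hand the remaining steps are formal.
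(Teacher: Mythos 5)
Your argument is correct and follows essentially the same route as the paper's proof (which is outsourced to the cited reference): the triangular polynomial automorphism $L$ you construct recursively is exactly the ``simplicial cumulant'' change of coordinates on $\C^N$ alluded to in Remark~\ref{cumulants}, and your key identity checks out --- the degree-one part in $\mathbf u$ cancels and the diagonal coefficient $(-1)^{|\sigma|}(1-|\sigma|)$ is nonzero for $|\sigma|\ge 2$, also in the presence of repeated labels, so the recursion and hence $L$ are well defined. The only cosmetic difference is that the reference writes the cumulant coordinates in closed form, whereas you obtain them by inverting a triangular system.
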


Note that this Theorem~\ref{main-tangent} was mentioned in \cite[Theorem 2.9]{khadam2020secant}, but a proof was not included. So, we would like to inlcude the proof here, as it plays the central role in our investigations. To this end, we define an automorphism of $\C^N$ (originally was stated at the beginning of \cite[Section 2.2]{khadam2020secant}) as follows:
\[
y_\sigma\;=\;x_\sigma,\qquad \mbox{if }\dim (\sigma)\leq  0,\]
and otherwise, for all $\sigma\in \Delta_{\geq 2}$
\[
y_\sigma\;=\;\sum_{\sigma'\subseteq \sigma} (-1)^{\dim \sigma+\dim \sigma'}x_{\sigma'}\prod_{i\in \sigma\setminus \sigma'}x_{i},
\]
where the sum is taken over all subsimplices $\sigma'$ of $\sigma$ including the empty simplex, and the product is taken over vertices $i\in \sigma\setminus \sigma'$; see \cite[Example 2.4]{khadam2020secant} for an example.

\begin{proof}[Proof of Theorem~\ref{main-tangent}]
In the following we write $\sigma = v$ for $\sigma = \{v\}$, and regard $x_\sigma$ and $y_\sigma$ as polynomials in $t_i$ and $u_i$. Further, we claim that
$$
\begin{array}{ll}
	y_v\;=\; 1/n(u_v + (n-1)t_v) & \mbox{if }\dim(v)=0,\\
	y_\sigma\;=\;c\prod_{i\in \sigma}(u_i-t_i)& \mbox{if }\dim(\sigma)\geq 1
\end{array}
$$
for some constant $c$. Indeed, for $\dim(v) = 0$ it is clear since $y_v = x_v =  1/n(u_v + (n-1)t_v)$. For the case of $\dim(\sigma)\geq 1$, recall that $\Delta$ is labelled with repetitions, that is, we might have $t_i = t_j$ (likewise $u_i = u_j$) for two distinct vertices $i$ and $j$, but if we prove our claim of $y_\sigma$ for distinct variables $t_i, u_i$, then the claim in full generality will follow by specializing $t$'s and $u$'s. Hence, we may assume that for every $v\in \Delta$ we have associated two independent variables $t_v$ and $u_v$. In order to prove our claim, we need to prove that $u_v-t_v$ divides $y_\sigma$ for every $v\in \sigma$. Equivalently, we prove that if $u_v = t_v$, then $y_\sigma = 0$.

Let $u_v = t_v$ and define $\sigma'':=\sigma'\setminus\{v\}$ for some $v\in \sigma' \subseteq \sigma$. Then $x_v = u_v = t_v$ and we get, by using the parametrization of $\tau(V_\Delta)$, that
\[
x_{\sigma'}\prod_{j\in\sigma\setminus\sigma'}x_j = 1/n \left(\sum_{i \in \sigma'} u_i \prod_{j \in \sigma'\setminus\{i\}}t_j + (n-\dim\sigma' - 1) \prod_{j \in \sigma'} t_j \right)\prod_{j\in\sigma\setminus\sigma'}t_j=
\]
\[
= 1/n\left(t_v\prod_{j \in \sigma'\setminus\{v\}}t_j + \sum_{i \in \sigma'\setminus \{v\}} u_it_v \prod_{j \in (\sigma'\setminus \{v\})\setminus\{i\}}t_j+ (n-\dim\sigma' - 1) t_v\prod_{j \in \sigma'\setminus\{v\}}t_j\right)\prod_{j\in\sigma\setminus\sigma'}t_j
\]
\[
= 1/n\left( \sum_{i \in \sigma'\setminus \{v\}} u_i \prod_{j \in (\sigma'\setminus \{v\})\setminus\{i\}}t_j+ (n-\dim(\sigma'\setminus \{v\})-1)\prod_{j \in \sigma'\setminus\{v\}}t_j \right) \prod_{j\in(\sigma\setminus\sigma')\cup\{v\}}t_j = 
\]
\[
= x_{\sigma'\setminus\{v\}}\prod_{j\in(\sigma\setminus\sigma')\cup\{v\}}x_j = x_{\sigma''}\prod_{j\in\sigma\setminus\sigma''}x_j.
\]
Since the change of coordinates $y_\sigma$ are hierarchical in a sense that their formulas depend only on $x_{\sigma'}$ for $\sigma' \subseteq \sigma$, and the contributions of $x_{\sigma'}\prod_{j\in\sigma\setminus\sigma'}x_j$ and $x_{\sigma''}\prod_{j\in\sigma\setminus\sigma''}x_j$ cancel each other, therefore $y_\sigma = 0$. As $y_\sigma$ is divisible by $\prod_{i\in\sigma}(u_i-t_i)$ and $y_\sigma$ is a homogeneous polynomial of degree $\dim\sigma + 1$ in variables $u_i$ and $t_i$, hence our claim follows.

Finally, if we make a change of variables as $u_i' := u_i-t_i$, then for $\sigma \in \Delta_{\geq 2}$ $y_\sigma$ becomes a monomial which prametrizes $T_\Delta$. Also, it is easy to see that for $\dim(v) = 0$, $y_v$ parametrizes $\C^n$.

\end{proof}

\begin{rem}\label{cumulants}
	Note that the change of coordinates $y_\sigma$ from $\C^N$ to $\C^N$ was inspired by cumulants. $y_\sigma$ are in fact the cental change of coordinates which are required to get another coordinate changes from $\C^N$ to $\C^N$ called simplicial cumulants \cite[Definition 2.5]{khadam2020secant}, which were generalization of secant cumulants \cite{moz}. The secant variety of the Segre-Veronese variety locally turned out to be a toric variety in simplicial cumulants \cite[Remark 3.3]{khadam2020secant}. Interested reader can find more detail about cumulant coordinates in \cite{zwiernik2012cumulants, pwz}.
\end{rem}

Next we present an example of a simplicial complex whose associated toric variety is the (affine) Segre--Veronese variety --- our main object of investigations.

\begin{exm}\label{seg-ver}
	Fix $k\in \N$ a positive integer and $\mathbf a,\mathbf b\in \N^k$ where $\mathbf a=(a_1,\ldots,a_k), \mathbf b=(b_1,\ldots,b_k)$ such that $a_i, b_i$ are positive integers. Consider the vertex set $V=V_1\sqcup \cdots \sqcup V_k$, where each $V_i$ has $a_ib_i$ vertices such that for each $j=1,\ldots, b_i$, exactly $a_i$ vertices get labelled $t_{i,j}$. We denote $\Delta_{\rm SV}$ the simplicial complex with vertex set $V$. A subset $\sigma$ of $V$ forms a simplex of $\Delta_{\rm SV}$ if and only if $|\sigma\cap V_i|\leq a_i$ for all $i=1,\ldots,k$.
\end{exm}

If $\sigma\in \Delta_{\rm SV}$ then $\sigma=\sigma_1\sqcup\cdots\sqcup \sigma_k$, where each $\sigma_i$ is a multiset of labels $t_{i,j}$ with $|\sigma_i|\leq a_i$. Let $n=b_1+\cdots +b_k$ and $N$ be the number of simplices in $\Delta_{\rm SV}$. The toric embedding $e_{\Delta_{\rm SV}}:\C^n\to \C^N$ is given by 
$$
x_\sigma\;\;=\;\;\prod_{i=1}^k\prod_{j\in \sigma_i}t_{i,j}\qquad\mbox{for all}\quad\sigma\in \Delta_{\rm SV}.$$
The corresponding projective variety is obtained by introducing additional variables $t_{i,0}$ for $i=1,\ldots,k$ (the coordinates of each $\P^{b_i}$ are $(t_{i,0},\ldots,t_{i,b_i})$) and considering now a homogeneous parameterization $\P^{b_1}\times \cdots\times \P^{b_k}\to \P^{N}$
$$
x_\sigma\;\;=\;\;\prod_{i=1}^kt_{i,0}^{a_i-|\sigma_i|}\prod_{j\in \sigma_i}t_{i,j}\qquad\mbox{for all}\quad\sigma\in \Delta_{\rm SV}.$$
The image of this map is the \emph{Segre--Veronese variety} 
$$X\;:=\;v_{a_1}(\P^{b_1})\times\dots\times v_{a_k}(\P^{b_k}),$$ 
which is the embedding of the product $\P^{b_1}\times\dots\times\P^{b_k}$ given by the very ample line bundle $\o(a_1,\ldots,a_k)$. \textbf{From this point onward $\Delta_{\rm SV}$ will always be denoted by $\Delta$.}

\begin{rem}\label{local-coordinates}
	(a) The original affine variety $V_{\Delta}$ is isomorphic to the open subset of the Segre--Veronese variety obtained by setting $t_{i,0}\neq 0$ for all $i=1,\ldots,k$. This amounts to assuming $x_\emptyset\neq 0$. The Segre--Veronese variety can be covered by such varieties obtained by assuming that exactly one variable $t_{i,j}$ for each $i=1,\ldots,k$ is necessarily nonzero, or in other words, that a given coordinate $x_\sigma$ is nonzero. 
	
	(b) Consider the open subset of $\tau(X)$ given by $\tau(X)\cap \{x_\emptyset\neq 0\}$. On this subset $\tau(X)$ is isomorphic to the tangential variety of the affine variety $V_{\Delta}$. By Theorem~\ref{main-tangent} this (affine) tangential variety is isomorphic to the product of $\C^n$ and the variety $T_\Delta$ associated to the simplicial complex $\Delta$. This means that $\tau(X)$ can be covered by toric varieties, which is our main motivation to study the variety $T_{\Delta}$.
\end{rem}

Above Remark~\ref{local-coordinates}(b) gives us the following proposition:
\begin{obs}\label{toric covering}
	The tangential variety $\tau(X)$ is covered by toric varieties isomorphic to a product	of an affine space of dimension $n=\sum_{i=1}^{k}b_i$ and the toric variety $T_\Delta$.
\end{obs}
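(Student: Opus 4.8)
The plan is to obtain the asserted open cover by transporting the single chart of Remark~\ref{local-coordinates}(b) around by the automorphism group of $X$. Set $G=\prod_{i=1}^{k}\mathrm{PGL}_{b_i+1}$. Since $X$ is the image of the $G$-equivariant embedding of $\P^{b_1}\times\cdots\times\P^{b_k}$ given by $\o(a_1,\dots,a_k)$, the group $G$ acts linearly on $\P^N$ and preserves $X$; because any automorphism of $\P^N$ preserving $X$ carries tangent lines of $X$ to tangent lines of $X$ and is a homeomorphism, $G$ also preserves $\tau(X)$ and acts on it by automorphisms. The first step is then to recall, via Remark~\ref{local-coordinates}(b), that the open set $U:=\tau(X)\cap\{x_\emptyset\neq 0\}$ is isomorphic to the tangential variety of $V_\Delta$, which by Theorem~\ref{main-tangent} is isomorphic to $\CC^n\times T_\Delta$. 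Hence for every $g\in G$ the open subset
\[
gU\;=\;\tau(X)\cap\{\,x_\emptyset\circ g^{-1}\neq 0\,\}\;\subseteq\;\tau(X)
\]
is again isomorphic to $\CC^n\times T_\Delta$.

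The crux is to show that the family $\{gU:g\in G\}$ already covers $\tau(X)$. For this I would use that $H^0(\P^N,\o(1))\cong H^0\bigl(X,\o(a_1,\dots,a_k)\bigr)=\mathrm{Sym}^{a_1}(\CC^{b_1+1})^{*}\otimes\cdots\otimes\mathrm{Sym}^{a_k}(\CC^{b_k+1})^{*}$ is an irreducible $G$-module, having the coordinates $x_\sigma$ ($\sigma\in\Delta$) as a basis. Therefore the $G$-orbit of the nonzero vector $x_\emptyset$ spans a nonzero $G$-submodule, hence all of $H^0(\P^N,\o(1))$, so the linear forms $x_\emptyset\circ g^{-1}$, $g\in G$, have no common zero on $\P^N$. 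Consequently $\bigcup_{g\in G}\{x_\emptyset\circ g^{-1}\neq 0\}=\P^N\supseteq\tau(X)$, and thus $\tau(X)=\bigcup_{g\in G}gU$. Finally, since $\tau(X)$ is a Noetherian topological space, finitely many of the open sets $gU$ suffice, and each of them is isomorphic to $\CC^n\times T_\Delta$; this is exactly the covering claimed.

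I expect the covering step to be the genuine obstacle, and it is worth flagging why the obvious shortcut fails. One would like to simply use the standard affine charts $\{x_\sigma\neq 0\}$, $\sigma\in\Delta$, of $\P^N$ (these certainly cover $\P^N$, hence $\tau(X)$) and to argue that each $\tau(X)\cap\{x_\sigma\neq 0\}$ is isomorphic to $\CC^n\times T_\Delta$ by a relabelling symmetry of $\Delta$. This does work for the ``coordinate-point'' simplices (those $\sigma=\bigsqcup_i\sigma_i$ with $\sigma_i$ either empty or $a_i$ copies of a single label), but for the remaining $\sigma$ there is in general no automorphism of $\P^N$ preserving $X$ that moves $\{x_\emptyset\neq 0\}$ onto $\{x_\sigma\neq 0\}$, and the conclusion can genuinely fail: for the tangent developable of the twisted cubic $v_3(\P^1)\subset\P^3$ one computes that on a chart $\{x_\sigma\neq 0\}$ with $\sigma$ a single vertex, the singular locus of $\tau(X)\cap\{x_\sigma\neq 0\}$ is isomorphic to $\CC^{*}$ rather than to $\CC$, so that chart is not isomorphic to $\CC^n\times T_\Delta$. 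This is precisely why one must allow genuinely non-monomial automorphisms of $X$, and why the representation-theoretic fact that the $G$-orbit of $x_\emptyset$ spans $H^0(\P^N,\o(1))$ is the key input that makes the resulting charts cover all of $\tau(X)$, and not merely $X$.
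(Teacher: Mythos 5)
Your argument is correct, but it takes a genuinely different route from the paper, whose entire justification is Remark~\ref{local-coordinates}: cover $X$ by the $\prod_i(b_i+1)$ monomial charts $\{x_\sigma\neq 0\}$ with $\sigma$ a ``coordinate-point'' simplex, identify each with $V_\Delta$, and transport Theorem~\ref{main-tangent}. You instead translate the single chart $\{x_\emptyset\neq 0\}$ by $\prod_i\mathrm{PGL}_{b_i+1}$ and use that the sections $\ell_1^{a_1}\otimes\cdots\otimes\ell_k^{a_k}$ span $H^0(\P^N,\o(1))$, so the translated charts cover all of $\P^N$ and hence all of $\tau(X)$. Your closing observation is well taken: the monomial charts are guaranteed to cover $X$ but not $\tau(X)$ --- for $v_3(\P^1)$ the point $(0{:}1{:}0{:}0)$ lies on the tangent line at $(1{:}0{:}0{:}0)$ yet has $x_0=x_3=0$, so both coordinate-point charts miss it, and your $\C^*$-versus-$\C$ computation of singular loci shows the remaining charts $\{x_1\neq 0\},\{x_2\neq 0\}$ cannot be substituted. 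So the $G$-translation step you supply is not redundant; it closes a covering step the paper leaves implicit. Two cosmetic points: $\mathrm{PGL}_{b_i+1}$ acts on $H^0(\P^N,\o(1))$ only projectively, so either lift to $\prod_i\mathrm{GL}_{b_i+1}$ or argue directly that powers of linear forms span symmetric powers (which is all you use); and the Noetherian reduction to a finite subcover is not needed for the statement as worded.
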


Note that the main difference between the secant (studied in \cite{khadam2020secant}) and tangential variety of the Segre-Veronese variety (which we study in this article) is that in case of secant variety, the associated toric varieties are always normal \cite[Proposition 3.5]{khadam2020secant}. Hence such toric varieties are Cohen-Macaulay and there is quite an easy criterion \cite[Proposition 8.2.12]{cox2011toric} to check whether they are Gorenstein. The authors of \cite{khadam2020secant} used this criterion to classify the Gorenstein property for secant varieties.

However, in our case, the associated toric variety $T_\Delta$ is not always normal, thus we can not apply this criterion. Instead, we employ \cite[Theorem 4.1]{trung1986affine}, which we recall below for readers' convenience, adapted to $T_\Delta=\Spec \C[S_\Delta]$. First we describe the semigroup $S_\Delta \subseteq \N^n$ associated to the toric variety $T_{\Delta}$, that is, when $T_{\Delta}=\Spec \C[S_\Delta]$. By following \cite{khadam2020secant} $S_\Delta$ is generated by those lattice points $(x_{i,j})\in \N^n$ which satisfy the inequalities
\begin{itemize}
	\item [(1)] $\sum_{j} x_{i,j}\leq a_i$ for all $i=1,\ldots,k$, and
	\item [(2)] $\sum_{i,j} x_{i,j}\geq 2$.
\end{itemize}

We now proceed with a few notations, which will be clarified below by using a list of examples, see Example~\ref{examples}.

\begin{nots}\label{not-goren}
	\begin{itemize}
		\item Let $G_\Delta$ denote the additive group in $\Z^n$ generated by $S_\Delta$ and put $r=\rank_\Z G_\Delta$.
		
		\item Let $C_\Delta$ denote the convex rational polyhedral cone spanned by $S_\Delta$ in $\Q^n_{\geq0}$. Hence $\dim_\Q C_\Delta=r$.
		
		\item Let $\mathcal{F}$ be the set of all facets of $C_\Delta$.
		
		\item For any facet $F\in\mathcal F$ let
		\[
		S_F=\{x=(x_{i,j})\in G_\Delta~|~ x+y\in S_\Delta \text{ for some } y=(y_{i,j})\in S_\Delta\cap F\}
		\]
		and $S'_\Delta=\bigcap_{F\in\mathcal F} S_F$. Note that $S_\Delta\subseteq S_F$ for every $F\in\mathcal F$.
		
		\item In our case, we will have only two kind of facets so we denote them by
		\begin{gather*}
			F_{i,j}:=\{x=(x_{i,j})~|~ x_{i,j}=0\}~~ \text{ and }~~ F_i:=\{x=(x_{i,j})~:~ \sum_j x_{i,j}= \sum_{l\neq i}\sum _j x_{l,j}\},
		\end{gather*}
		cf. Lemma~\ref{cone-facets}. We denote $S_{i,j}:=S_{F_{i,j}}$ and $S_i:=S_{F_i}$.
			
		\item For a subset $J$ of $\mathcal F$, we set $G_J=\bigcap_{F\not \in J}S_F \setminus \bigcup_{F' \in J}S_{F'}$. In particular, $$G_\mathcal{F}=G_\Delta \setminus \bigcup_{F' \in \mathcal{F}}S_{F'}.$$
		
		\item For a subset $J$ of $\mathcal F$, let $\pi_J$ be the simplicial complex of nonempty subsets $I$ of $J$ with the property $\bigcap_{F\in I}(S_\Delta\cap F)\not = \{0\}$.
	\end{itemize}
\end{nots}

We recall that a simplicial complex $\Delta$ is called \emph{acyclic} if the reduced homology group $\tilde{H}_q(\Delta; \C)$ vanishes for all $q\geq0$ (see \cite[Section 1.3]{cca} for basics on reduced homology groups). Moreover, let $n=b_1+\cdots +b_k$ and $\mathcal I=\{(i,j):1\leq i\leq k, 1\leq j\leq b_i\}$, and note that $|\mathcal I|=n$. The canonical unit vectors of $\mathbb R^n$ are denoted by $e_{i,j}$ where $(i,j)\in \mathcal I$ and $x=(x_{i,j})\in \R^n$. We follow the convention that the elements of $\mathcal I$ are ordered lexicographically. Furthermore, from this point onward, without loss of generality, we assume that $a_1\leq a_2\leq \ldots \leq a_k$.

\begin{thm}(\cite[Theorem 4.1]{trung1986affine})\label{hoa-trung}
	$\C[S_\Delta]$ is a Cohen--Macaulay (resp. Gorenstein) ring if and only if the following conditions are satisfied:
	\begin{itemize}
		\item[(i)] $S'_\Delta=S_\Delta$ (resp. $G_{\mathcal F} = x - S_\Delta$ for some $x\in G_\Delta$), and
		
		\item[(ii)] for every nonempty proper subset $J$ of $\mathcal F$, $G_J=\emptyset$ or $\pi_J$ is acyclic.
	\end{itemize}
\end{thm}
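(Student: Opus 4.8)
The plan is to obtain this statement as a direct specialization of the general criterion of Hoa and Trung for affine semigroup rings, so the "proof" amounts to checking that our data fall under the hypotheses of \cite[Theorem 4.1]{trung1986affine} and then quoting its conclusion. First I would observe that, by the description of $S_\Delta$ through the inequalities (1)--(2) above, $S_\Delta$ is a finitely generated subsemigroup of $\N^n$; consequently $\C[S_\Delta]$ is an affine semigroup ring and $T_\Delta=\Spec\C[S_\Delta]$ is the affine toric variety of Definition~\ref{toric-simplicial}(a). Pointedness of the associated cone is automatic since $S_\Delta\subseteq\N^n$, and (as emphasized in Remark~\ref{local-coordinates}) one does \emph{not} assume $S_\Delta=G_\Delta\cap C_\Delta$ --- which is exactly the generality in which \cite[Theorem 4.1]{trung1986affine} operates.

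The second step is a dictionary check: the combinatorial objects feeding into \cite[Theorem 4.1]{trung1986affine} coincide with those set up in Notations~\ref{not-goren}. Namely, the group generated by $S_\Delta$ is $G_\Delta$ with $r=\rank_\Z G_\Delta$; the cone $C_\Delta=\Q_{\geq0}S_\Delta$ has $\dim_\Q C_\Delta=r$; $\mathcal F$ is its facet set; the "face semigroups" $S_F$ for $F\in\mathcal F$, their intersection $S'_\Delta=\bigcap_{F}S_F$, the sets $G_J$ for $J\subseteq\mathcal F$, and the auxiliary simplicial complexes $\pi_J$ all match the definitions in \cite{trung1986affine} verbatim up to notational translation. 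With this identification in place, the asserted equivalences are precisely \cite[Theorem 4.1]{trung1986affine}: $\C[S_\Delta]$ is Cohen--Macaulay iff $S'_\Delta=S_\Delta$ and condition (ii) holds, and Gorenstein iff moreover $G_{\mathcal F}=x-S_\Delta$ for some $x\in G_\Delta$.

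The only point requiring genuine attention --- and hence the mild obstacle --- is confirming the standing hypotheses of the Hoa--Trung framework, i.e. that $S_\Delta$ is finitely generated, that it generates $G_\Delta$ as a group, and that $C_\Delta$ is a pointed rational polyhedral cone. All three are immediate from the explicit generating set of $S_\Delta$ given by (1)--(2) and from $S_\Delta\subseteq\N^n$. Once these are recorded, \cite[Theorem 4.1]{trung1986affine} applies word for word, and the proof concludes by citing it; the substantive work of the paper then lies not here but in the later computation, for each pair $(\mathbf a,\mathbf b)$, of the cone $C_\Delta$, its facets (Lemma~\ref{cone-facets}), the semigroups $S_F$, and the acyclicity of the complexes $\pi_J$.
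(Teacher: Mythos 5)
Your proposal is correct and takes the same route as the paper: the paper offers no independent proof of this statement but simply recalls \cite[Theorem 4.1]{trung1986affine} adapted to the notation $T_\Delta=\Spec\C[S_\Delta]$, exactly as you do. Your explicit verification of the standing hypotheses (finite generation of $S_\Delta$, pointedness of $C_\Delta$, and the dictionary with Notations~\ref{not-goren}) is a reasonable elaboration of what the paper leaves implicit, and nothing more is needed.
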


Let us look at a few examples.

\begin{exm}\label{examples}
	(1) Let $k=2$, $\mathbf a=(2,2)$, $\mathbf b=(1,1)$. Then
	\[
	S_\Delta = \N^2 \setminus \left(\{x\in 		\N^2 : 2\nmid x_{1,1}, x_{2,1}=0\}\cup \{x\in \N^2 : x_{1,1}		=0, 2\nmid x_{2,1}\}\right),
	\]
	$G_\Delta=\Z^2$, see Lemma~\ref{group} for general description of $G_\Delta$, and the cone $C_\Delta$ has two facets $F_{1,1}$ and $F_{2,1}$, see Lemma~\ref{cone-facets} for a more general facet description of our cone. Also,
	\[
	S_{1,1}=\{x\in \Z^2 : x_{1,1}>0\}\cup \{x\in \Z^2 : x_{1,1}		=0,2\mid x_{2,1}\}
	\] and
	\[
	S_{2,1}=\{x\in \Z^2 : x_{2,1}>0\}		
	\cup \{x\in \Z^2 : 2\mid x_{1,1},x_{2,1}=0\},
	\]
	and hence $S'_\Delta=S_{1,1}\cap S_{2,1}=S_\Delta$. Moreover,
	\[
	G_{\mathcal F}=\{x\in\Z^2 : x_{1,1}<0, x_{2,1}<0\}\cup \{(-1-2n,0) : n\in \N\}\cup \{(0,-1-2m) : m\in \N\}
	\]
	and there is no point $x\in \Z^2$ such that $G_{\mathcal F}=x-S_\Delta$, see Figure~\ref{ex(1)} below. Finally, for $I$ equal to $\{F_{1,1}\}$ or $\{F_{2,1}\}$, $\pi_I$ is a point and hence acyclic. Therefore $\C[S_\Delta]$ is Cohen--Macaulay but neither Gorenstein nor normal. This means that the tangential variety $\tau(v_2(\P^1)\times v_2(\P^1))$ is Cohen--Macaulay but neither Gorenstein nor normal.
	
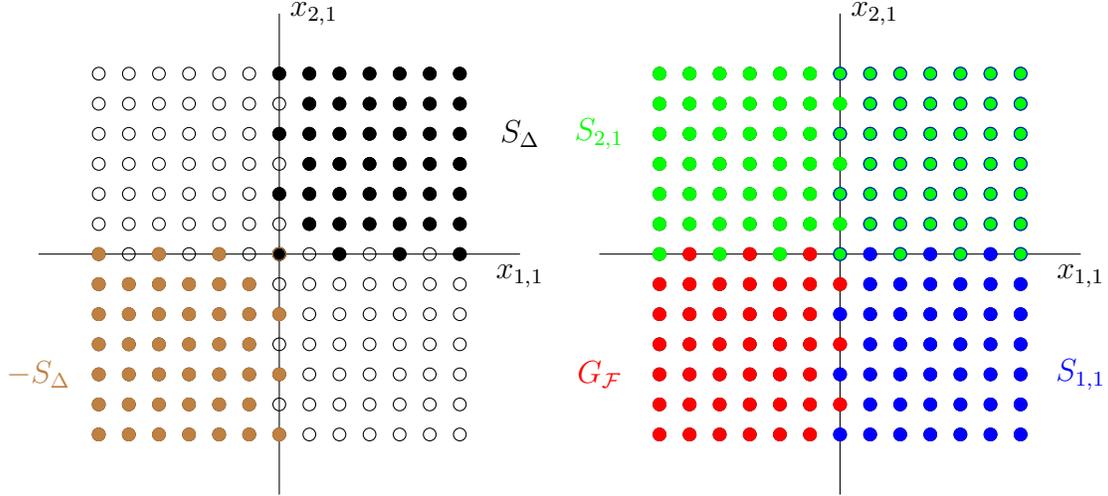
\begin{figure}[h]
\centering
\begin{tikzpicture} [scale=0.8]
\draw (-4,0) -- (4,0) node [anchor=north]{$x_{1,1}$};
\draw (0,-4) -- (0,4) node [anchor=west]{$x_{2,1}$} ;
\foreach \x in {-6,...,6}{
      \foreach \y in {-6,...,6}{
       \draw [black] (\x/2,\y/2) circle (3pt);;}}
\foreach \x in {1,2,...,6}{
      \foreach \y in {1,2,...,6}{
       \filldraw [black] (\x/2,\y/2) circle (3pt);;}}
\foreach \x in {0,1,2,3}{
\filldraw [black] (\x,0) circle (3pt);
\filldraw [black] (0,\x) circle (3pt);}
\filldraw [black] (4,2) circle (0pt) node {$S_\Delta$};
\foreach \x in {1,2,...,6}{
      \foreach \y in {1,2,...,6}{
       \filldraw [brown] (-\x/2,-\y/2) circle (3pt);;}}
\foreach \x in {1,2,3}{
\filldraw [brown] (-\x,0) circle (3pt);
\filldraw [brown] (0,-\x) circle (3pt);}
\draw [brown] (0,0) circle (3pt);
\filldraw [brown] (-4,-2) circle (0pt) node {$-S_\Delta$};

\end{tikzpicture}
\begin{tikzpicture} [scale=0.8]
\draw (-4,0) -- (4,0) node [anchor=north]{$x_{1,1}$};
\draw (0,-4) -- (0,4) node [anchor=west]{$x_{2,1}$} ;
\foreach \x in {-6,-5,...,6}{
      \foreach \y in {-6,-5,...,6}{
       \draw [black] (\x/2,\y/2) circle (3pt);;}}
\foreach \x in {-6,-5,...,6}{
      \foreach \y in {1,2,...,6}{
       \filldraw [green] (\x/2,\y/2) circle (3pt);;}}
\foreach \x in {-3,-2,...,3}{
\filldraw [green] (\x,0) circle (3pt);}
\filldraw [green] (-4,2) circle (0pt) node {$S_{2,1}$};  
\foreach \x in {-6,-5,...,-1}{
      \foreach \y in {1,2,...,6}{
       \filldraw [blue] (\y/2,\x/2) circle (3pt);;}}
       \foreach \x in {1,2,...,6}{
      \foreach \y in {1,2,...,6}{
       \draw [blue] (\y/2,\x/2) circle (3pt);;}}
\foreach \x in {-3,-2,-1}{
\filldraw [blue] (0,\x) circle (3pt);}
\foreach \x in {0,...,3}{
\draw [blue] (0,\x) circle (3pt);}
\foreach \x in {1,...,3}{
\draw [blue] (\x,0) circle (3pt);}
\foreach \x in {1,3,5}{
\filldraw [blue] (\x/2,0) circle (3pt);}

\filldraw [blue] (4,-2) circle (0pt) node {$S_{1,1}$}; 
\foreach \x in {-6,-5,...,-1}{
      \foreach \y in {-6,-5,...,-1}{
       \filldraw [red] (\x/2,\y/2) circle (3pt);;}}
\foreach \x in {-5,-3,-1}{
\filldraw [red] (\x/2,0) circle (3pt);
\filldraw [red] (0,\x/2) circle (3pt);}
\filldraw [red] (-4,-2) circle (0pt) node {$G_\mathcal F$};

\end{tikzpicture}
\caption{Example~\ref{examples}(1), $\mathbf a=(2,2)$, $\mathbf b=(1,1)$}
\label{ex(1)}
\end{figure}

	(2) Let $k=2$, $\mathbf a=(2,2)$, $\mathbf b=(1,2)$. Then $C_\Delta$ has three facets $F_{1,1}, F_{2,1}$ and $F_{2,2}$. Also, $G_\Delta=\Z^3$,
	$S_{1,1}=\{x\in \Z^3 : x_{1,1}>0\}\cup \{x\in \Z^3 : x_{1,1}=0, 	2\mid x_{2,1}+x_{2,2}\},$
	\begin{gather*}
		S_{2,1}=\{x\in \Z^3 : x_{2,1}\geq 0\}~~~~ \text{ and }~~~~ S_{2,2}=\{x\in\Z^3 : x_{2,2}\geq 0\}.
	\end{gather*}
	Therefore the point $e_{1,1}\in S'_\Delta\setminus S_\Delta$ and hence $\C[S_\Delta]$ is not Cohen--Macaulay. This means that the tangential variety $\tau(v_2(\P^1)\times v_2(\P^2))$ is not Cohen--Macaulay and hence neither Gorenstein nor normal. We in fact can generalize this example to any $b_2\geq2$, cf. Lemma~\ref{not-cm} (2). \\
	
	(3) Let $k=2$, $\mathbf a=(1,2)$, $\mathbf b=(1,1)$. Then 
	\[
	S_\Delta=\{x\in\N^2 : x_{1,1}\leq x_{2,1}\}\setminus \{x\in \N^2 : x_{1,1}=0, 2\nmid x_{2,1}\},
	\]
	$G_\Delta=\Z^2$, and $C_\Delta$ has two facets $F_{1,1}$ and $F_1$. Also,
	\begin{gather*}
		S_{1,1}=\{x\in \Z^2 : x_{1,1}>0\}\cup \{x\in \Z^2 : x_{1,1}=0, 2\mid x_{2,1}\}~ \text{ and }~
		S_1=\{x\in \Z^2 : x_{1,1}\leq x_{2,1}\},
	\end{gather*}
	and hence $S'_\Delta=S_{1,1}\cap S_1=S_\Delta$. Moreover,
	\[
	G_{\mathcal F}=\{x\in \Z^2 : x_{1,1}<0,x_{2,1}<0, x_{1,1}>x_{2,1}\}\cup \{(0,-1-2n) : n\in \N\}
	\]
	and hence $G_{\mathcal F}=(0,-1)-S_\Delta$, see Figure~\ref{ex(2)} below. Finally, for $I$ equal to $\{F_{1,1}\}$ or $\{F_1\}$, $\pi_I$ is a point and hence acyclic. Therefore $\C[S_\Delta]$ is Gorenstein (so is Cohen--Macaulay). Note that $\C[S_\Delta]$ is not normal. This means that the tangential variety $\tau(v_1(\P^1)\times v_2(\P^1))$ is Gorenstein but not normal.\\
	
	\begin{figure}[h]
\centering
\begin{tikzpicture} [scale=0.8]
\draw (-4,0) -- (4,0) node [anchor=north]{$x_{1,1}$};
\draw (0,-4) -- (0,4) node [anchor=west]{$x_{2,1}$} ;
\foreach \x in {-6,...,6}{
      \foreach \y in {-6,...,6}{
       \draw [black] (\x/2,\y/2) circle (3pt);;}}
\foreach \x in {1,2,...,6}{
      \foreach \y in {\x,...,6}{
       \filldraw [black] (\x/2,\y/2) circle (3pt);;}}
\foreach \x in {0,1,2,3}{
\filldraw [black] (0,\x) circle (3pt);}
\filldraw [black] (2,3.8) circle (0pt) node {$S_\Delta$}; 
\foreach \x in {1,2,...,6}{
      \foreach \y in {\x,...,6}{
       \filldraw [brown] (-\x/2,-\y/2) circle (3pt);;}}
\foreach \x in {1,2,3}{
\filldraw [brown] (0,-\x) circle (3pt);}
\draw [brown] (0,0) circle (3pt);
\filldraw [brown] (-2,-3.8) circle (0pt) node {$-S_\Delta$}; 
\filldraw [red] (-2,-4) circle (0pt) node { };

\end{tikzpicture}
\begin{tikzpicture} [scale=0.8]
\draw (-4,0) -- (4,0) node [anchor=north]{$x_{1,1}$};
\draw (0,-4) -- (0,4) node [anchor=west]{$x_{2,1}$} ;
\foreach \x in {-6,-5,...,6}{
      \foreach \y in {-6,-5,...,6}{
       \draw [black] (\x/2,\y/2) circle (3pt);;}}
\foreach \x in {-6,-5,...,6}{
      \foreach \y in {\x,...,6}{
       \filldraw [green] (\x/2,\y/2) circle (3pt);;}}
\filldraw [green] (-4,2) circle (0pt) node {$S_{1}$};  
\foreach \x in {0,...,5}{
      \foreach \y in {-6,...,\x}{
       \filldraw [blue] (\x/2+0.5,\y/2) circle (3pt);;}}
\foreach \x in {1,...,6}{
      \foreach \y in {\x,...,6}{
       \draw [blue] (\x/2,\y/2) circle (3pt);;}}
\foreach \x in {-3,-2,-1}{
\filldraw [blue] (0,\x) circle (3pt);}
\foreach \x in {0,...,3}{
\draw [blue] (0,\x) circle (3pt);}

\filldraw [blue] (4,-2) circle (0pt) node {$S_{1,1}$}; 
\foreach \x in {-5,...,-1}{
      \foreach \y in {-5,...,\x}{
       \filldraw [red] (\x/2,\y/2-0.5) circle (3pt);;}}
\foreach \x in {-5,-3,-1}{
\filldraw [red] (0,\x/2) circle (3pt);}
\filldraw [red] (-2,-3.8) circle (0pt) node {$G_\mathcal F$};

\end{tikzpicture}
\caption{Example~\ref{examples}(3), $\mathbf a=(1,2)$, $\mathbf b=(1,1)$}
\label{ex(2)}
\end{figure}
	
	(4) Let $k=2$, $\mathbf a=(1,2)$, $\mathbf b=(1,2)$. Then $C_\Delta$ has four facets $F_{1,1}, F_{2,1}, F_{2,2}$ and $F_1$. Also
	\begin{gather*}
		S_{1,1}=\{x\in \Z^3 : x_{1,1}>0\}\cup \{(0, x_{2,1}, x_{2,2}) : 2\mid x_{2,1}+x_{2,2}\},~~ S_{2,1}=\{x\in \Z^3 : x_{2,1}\geq 0\},
	\end{gather*}
	\begin{gather*}
		S_{2,2}=\{x\in \Z^3 : x_{2,2}\geq 0\}~ \text{ and }~ S_1=\{x\in \Z^3 : x_{1,1}\leq x_{2,1}+x_{2,2}\}.
	\end{gather*}
	We first claim that $S_\Delta$ satisfies condition (ii) of the above theorem. Indeed if $J$ is a singleton subset of $\mathcal F$, then clearly $\pi_J$ is acyclic. When $J$ has two elements, we need to consider all the cases separately:
	\begin{itemize}
		\item If $J=\{F_{1,1}, F_{2,1}\}$, then $\pi_J=\{\emptyset, \{F_{1,1}\},\{F_{2,1}\}, \{F_{1,1}, F_{2,1}\}\}$ 
		which is a simplex and hence acyclic. Note that $G_J=S_{2,2}\cap S_1 \setminus S_{1,1}\cup S_{2,1} \not= \emptyset$, since it for example contains $(-1,-1,5)$.
		
		\item If $J=\{F_{1,1}, F_{2,2}\}$, then $\pi_J=\{\emptyset, \{F_{1,1}\},\{F_{2,2}\}, \{F_{1,1}, F_{2,2}\}\}$ 
		which is acyclic. Note that $G_J=S_{2,1}\cap S_1 \setminus S_{1,1}\cup S_{2,2} \not= \emptyset$, since it for example contains $(-1,5,-1)$.
		
		\item If $J=\{F_{1,1}, F_1\}$, then $\pi_J=\{\emptyset, \{F_{1,1}\},\{F_1\}\}$ 
		which is \emph{not} acyclic, since\\
		$\tilde{H}_0(\pi_J; \C)\cong \C$. But in this case $G_J=S_{2,1}\cap S_{2,2} \setminus S_{1,1}\cup S_1 = \emptyset$. 
		
		\item If $J=\{F_{2,1}, F_{2,2}\}$, then $\pi_J=\{\emptyset, \{F_{2,1}\},\{F_{2,2}\}\}$ 
		which is \emph{not} acyclic, since\\
		$\tilde{H}_0(\pi_J; \C)\cong \C$. But in this case $G_J=S_{1,1}\cap S_1 \setminus S_{2,1}\cup S_{2,2} = \emptyset$. 
		
		\item If $J=\{F_{2,1}, F_1\}$, then $\pi_J=\{\emptyset, \{F_{2,1}\},\{F_1\}, \{F_{2,1}, F_1\}\}$ 
		which is acyclic. Note that $G_J=S_{1,1}\cap S_{2,2} \setminus S_{2,1}\cup S_1 \not= \emptyset$, since it for example contains $(1,-1,1)$.
		
		\item If $J=\{F_{2,2}, F_1\}$, then $\pi_J=\{\emptyset, \{F_{2,2}\},\{F_1\}, \{F_{2,2}, F_1\}\}$ which is acyclic. Note that $G_J=S_{1,1}\cap S_{2,1} \setminus S_{2,2}\cup S_1 \not= \emptyset$, since it for example contains $(1,1,-1)$.\\
		
		When $J$ has three elements, we need to consider all the cases separately:
		
		\item If $J=\{F_{1,1}, F_{2,1}, F_{2,2}\}$, then $\pi_J=\{\emptyset, \{F_{1,1}\},\{F_{2,1}\},\{F_{2,2}\}, \{F_{1,1}, F_{2,1}\}, \{F_{1,1}, F_{2,2}\}\}$ which is acyclic. Note that $G_J= S_1 \setminus S_{1,1}\cup S_{2,1}\cup S_{2,2} \not= \emptyset$, since it for example contains $(-2,-1,-1)$.
		
		\item If $J=\{F_{1,1}, F_{2,1}, F_1\}$, then $\pi_J=\{\emptyset, \{F_{1,1}\},\{F_{2,1}\},\{F_1\}, \{F_{1,1}, F_{2,1}\}, \{F_{2,1}, F_1\}\}$ which is acyclic. Note that $G_J= S_{2,2} \setminus S_{1,1}\cup S_{2,1}\cup S_1 \not= \emptyset$, since it for example contains $(-1,-4,1)$.
		
		\item If $J=\{F_{1,1}, F_{2,2}, F_1\}$, then $\pi_J=\{\emptyset, \{F_{1,1}\},\{F_{2,2}\},\{F_1\}, \{F_{1,1}, F_{2,2}\}, \{F_{2,2}, F_1\}\}$ which is acyclic. Note that $G_J= S_{2,1} \setminus S_{1,1}\cup S_{2,2}\cup S_1 \not= \emptyset$, since it for example contains $(-1,1,-4)$.
		
		\item If $J=\{F_{2,1}, F_{2,2}, F_1\}$, then $\pi_J=\{\emptyset, \{F_{2,1}\},\{F_{2,2}\},\{F_1\}, \{F_{2,1}, F_1\}, \{F_{2,2}, F_1\}\}$ which is acyclic. Note that $G_J= S_{1,1} \setminus S_{2,1}\cup S_{2,2} \cup S_1 \not= \emptyset$, since it for example contains $(1,-1,-1)$.
	\end{itemize}		
	Moreover, it is easy to check that $S'_\Delta=S_\Delta$, but there does not exist any $x\in \Z^3$ such that $G_{\mathcal F}=x-S_\Delta$. Therefore $\C[S_\Delta]$ is Cohen--Macaulay, but neither Gorenstein nor normal. This means that the tangential variety $\tau(v_1(\P^1)\times v_2(\P^2))$ is Cohen--Macaulay but neither Gorenstein nor normal (the same statement is true for any $b_2\geq2$, see Theorem~\ref{cohen-macaulay-goren}; see also Lemma~\ref{a=(1,2)}).\\
	
	(5) Let $k=1$, $a=3$, $b=1$ (same argument applies to any $a\geq3$). Then $S_\Delta=\N\setminus
	\{1\}$, $G_\Delta=\Z$ and $C_\Delta$ has only one facet $F_{1,1}$. Also, $S'_\Delta=S_{1,1}=S_\Delta$,
	\[
	G_{\{F_{1,1}\}}=\{-1-n : n\in \N\}\cup\{1\}=1-S_\Delta
	\]
	and condition (ii) of the previous Theorem~\ref{hoa-trung} trivially holds. Therefore $\C[S_\Delta]$ is Gorenstein (so is Cohen--Macaulay). Note that $\C[S_\Delta]$ is not normal. This means that the tangential variety $\tau(v_3(\P^1))$ is Gorenstein but not normal.\\
	
	(6) Let $k=1$, $a=2$, $b=2$. Then
	\begin{gather*}
		S_\Delta=\{x\in \N^2 : 2\mid x_{1,1}+x_{1,2}\},~~ G_\Delta=\{x\in \Z^2 : 2\mid x_{1,1}+x_{1,2}\}
	\end{gather*}
	and $C_\Delta$ has two facets $F_{1,1}, F_{1,2}$. Also,
	\begin{gather*}
		S_{1,1}=\{x\in G_\Delta : x_{1,1}\geq 0\}~~ \text{ and }~~ S_{1,2}=\{x\in G_\Delta : x_{1,2}\geq 0\},
	\end{gather*}
	and hence $S'_\Delta=S_{1,1}\cap S_{1,2}=S_\Delta$. Moreover, $G_{\mathcal F}=\{x\in G_\Delta : x_{1,1}<0, x_{1,2}<0\}$ and hence $G_{\mathcal F}=(-1,-1)-S_\Delta$. Finally, for $I$ equal to $\{F_{1,1}\}$ or $\{F_{1,2}\}$, $\pi_I$ is a point and hence acyclic. Therefore $\C[S_\Delta]$ is Gorenstein (see Theorem~\ref{cohen-macaulay-goren}(G5) for a generalization). Note that $\C[S_\Delta]$ is also normal. This means that the tangential variety $\tau(v_2(\P^2))$ is Gorenstein as well as normal.\\
	
	(7) Let $k=1$, $a=2$, $b=3$. Then
	\[
	S_\Delta=\{x\in \N^3 : 2\mid x_{1,1}+x_{1,2}+x_{1,3}\},~~ G_\Delta=\{x\in \Z^3 : 2\mid x_{1,1}+x_{1,2}+x_{1,3}\}
	\]
	and $C_\Delta$ has three facets $F_{1,1}, F_{1,2}$ and $F_{1,3}$. Also,
	\begin{gather*}
		S_{1,j}=\{x\in G_\Delta : x_{1,j}\geq 0\}~~ \text{ for }~~ j=1,2,3,
	\end{gather*}
	and hence $S'_\Delta=S_{1,1}\cap S_{1,2}\cap S_{1,3}=S_\Delta$. Moreover,
	\[
	G_{\mathcal F}=\{x\in G_\Delta : x_{1,1}<0, x_{1,2}<0, x_{1,3}<0\}
	\]
	which is not equal to $x-S_\Delta$, since the only possibility for $x$ is $(-1,-1,-1)$ which is not an element of $G_\Delta$. Finally, for any subset $I=\{F,F'\}$ of $\mathcal F$, $\pi_I=\{\emptyset, \{F\},\{F'\}, \{F,F'\}\}$ which is a simplex and hence acyclic. Therefore $\C[S_\Delta]$ is Cohen--Macaulay, but not Gorenstein (see Theorem~\ref{cohen-macaulay-goren}(CM6) and (G5) for a generalization). Note that $\C[S_\Delta]$ is also normal. This means that the tangential variety $\tau(v_2(\P^3))$ is Cohen--Macaulay and normal but not Gorenstein.
\end{exm}

\section{Cohen--Macaulay and Gorenstein tangential varieties}

In this section, we study the toric geometry of the toric variety $T_\Delta=\Spec \C[S_\Delta]$ and present the complete classification of those tangential varieties of the Segre--Veronese varieties which are Cohen--Macaulay or Gorenstein. We begin with the description of the group $G_\Delta$ when it is equal to the whole $\Z^n$.

\begin{lema}\label{group}
	We have $G_\Delta=\Z^n$, unless:
	\begin{itemize}
		\item [(i)] $k=2, \mathbf a=(1,1)$, when $G_\Delta=\{x\in\Z^n~:~\sum_j x_{1,j}=\sum_j x_{2,j}\}$,
		
		\item [(ii)] $k=1$, $a=2$, when $G_\Delta=\{x\in\Z^n~:~2\mid\sum_j x_{1,j}\}$, or
		
		\item [(iii)] $k=1$, $a=1$, when $G_\Delta=\{0\}$.
	\end{itemize}

\end{lema}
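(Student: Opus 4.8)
The plan is to exploit that $S_\Delta$ is generated as a semigroup by the set $P$ of lattice points of $\mathbb N^n$ satisfying conditions (1) and (2) recalled above, so that $G_\Delta$ is the subgroup of $\mathbb Z^n$ generated by $P$, equivalently by all differences $p-q$ with $p,q\in P$ together with one fixed element of $P$. The proof then proceeds by a case analysis on $k$ and on the tuple $\mathbf a$ (using the standing assumption $a_1\le\cdots\le a_k$). In each exceptional case the inclusion $G_\Delta\subseteq(\text{claimed group})$ is immediate because the relevant function --- $\sum_j x_{1,j}-\sum_j x_{2,j}$ in (i) and $\sum_j x_{1,j}\bmod 2$ in (ii) --- is linear and vanishes identically on $P$; so the substance is the reverse inclusion, and in the generic case the whole point is to show $G_\Delta=\mathbb Z^n$.

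First I would settle $k=1$. If $a=1$ there is no lattice point with $2\le\sum_j x_{1,j}\le 1$, hence $P=\varnothing$ and $G_\Delta=\{0\}$. If $a=2$ then every element of $P$ has coordinate sum exactly $2$, and the differences $2e_{1,1}-(e_{1,1}+e_{1,j})=e_{1,1}-e_{1,j}$ span $\{x:\sum_j x_{1,j}=0\}$, which together with $2e_{1,1}\in P$ generates $\{x:2\mid\sum_j x_{1,j}\}$. If $a\ge 3$ then $2e_{1,1},3e_{1,1}\in P$ give $e_{1,1}=3e_{1,1}-2e_{1,1}\in G_\Delta$, and for $j\ge 2$ the point $e_{1,1}+e_{1,j}\in P$ yields $e_{1,j}\in G_\Delta$; hence $G_\Delta=\mathbb Z^n$.

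For $k\ge 2$ I would isolate first the exceptional case $k=2$, $\mathbf a=(1,1)$: here every $p\in P$ forces $\sum_j x_{1,j}=\sum_j x_{2,j}=1$, and fixing $e_{2,1}$ (resp.\ $e_{1,1}$) the differences $(e_{1,j}+e_{2,1})-(e_{1,j'}+e_{2,1})=e_{1,j}-e_{1,j'}$ (resp.\ $e_{2,l}-e_{2,l'}$), together with the single element $e_{1,1}+e_{2,1}$, generate the whole hyperplane $\{x:\sum_j x_{1,j}=\sum_j x_{2,j}\}$. In the remaining cases --- $k\ge 3$, or $k=2$ with $a_2\ge 2$ --- the target is $G_\Delta=\mathbb Z^n$, which I would establish by a ``seed-and-bootstrap'' argument. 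To obtain a seed: if some $a_i\ge 2$, choose $i'\ne i$ and subtract $e_{i',1}+e_{i,1}\in P$ from $e_{i',1}+2e_{i,1}\in P$ to get $e_{i,1}\in G_\Delta$; if instead all $a_i=1$ (so $k\ge 3$), subtract $e_{1,1}+e_{2,1}\in P$ from $e_{1,1}+e_{2,1}+e_{3,1}\in P$ to get $e_{3,1}\in G_\Delta$. To bootstrap: once a single standard basis vector $e_{i_0,j_0}$ lies in $G_\Delta$, every $e_{i,j}$ with $i\ne i_0$ equals $(e_{i,j}+e_{i_0,j_0})-e_{i_0,j_0}$ with $e_{i,j}+e_{i_0,j_0}\in P$; and each leftover $e_{i_0,j}$ is obtained either as $(e_{i_0,j}+e_{i_0,j_0})-e_{i_0,j_0}$ when $a_{i_0}\ge 2$, or, when $a_{i_0}=1$ and hence $k\ge 3$, by pairing $e_{i_0,j}$ with a basis vector of a different block already shown to lie in $G_\Delta$.

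All of the individual computations above are elementary; the step I expect to require the most care --- the ``main obstacle'' --- is the seed-and-bootstrap argument in the non-exceptional $k\ge 2$ case, namely verifying that a basis vector can always be extracted whatever the distribution of the $a_i$, and that the degenerate slots (a block with $b_i=1$, or a block with $a_i=1$) are all covered. This is precisely the place where the dividing line between $G_\Delta=\mathbb Z^n$ and the three exceptional families emerges, and where one must be sure no further exceptional case has been overlooked. Once $G_\Delta$ is known, the dimension claim follows since $\dim T_\Delta=\rank_{\mathbb Z}G_\Delta$ and, by the toric covering of $\tau(X)$ from Proposition~\ref{toric covering}, locally $\tau(X)\cong\mathbb C^n\times T_\Delta$.
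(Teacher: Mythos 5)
Your proposal is correct and follows essentially the same route as the paper: both arguments identify explicit degree-two and degree-three generators of $S_\Delta$, take differences to extract a standard basis vector, and then propagate to all $e_{i,j}$, with the same case split on $k$ and on whether $a_2\geq 2$ (resp.\ $a\geq 3$). Your ``seed-and-bootstrap'' phrasing merely unifies the paper's separate explicit vector lists for $k\geq 3$ and $k=2$; the exceptional cases and their verifications coincide with the paper's.
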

\begin{proof}
	We separately consider cases $k\geq 3$, $k=2$, and $k=1$.
	\medskip 
	
	{Case I. $k\geq 3$}
	
	Consider the set of vectors
	\begin{enumerate}
		\item $e_{1,1}+e_{i,j}$ for all $(i,j)\in \mathcal I$ with $i\neq 1$,
		
		\item $e_{1,j}+e_{2,1}$ for all $1<j\leq b_1$,
		
		\item $e_{2,1}+e_{3,1}$, and
		
		\item $e_{1,1}+e_{2,1}+e_{3,1}$
	\end{enumerate}
	that lie in $S_\Delta$. Now combining (3) and (4) we get $e_{1,1}\in G_\Delta$, and hence by using (1), we obtain $e_{i,j}\in G_\Delta$ for all $(i,j)\in \mathcal I$ with $i\neq 1$. Finally use (2) to get $e_{1,j}\in G_\Delta$ for all $1<j\leq b_1$, showing $G_\Delta=\Z^n$.
	\medskip
	
	{Case II. $k=2$}
	
	Consider the set of vectors
	\begin{enumerate}
		\item $e_{1,1}+e_{2,j}$ for all $1\leq j\leq b_2$,
		
		\item $e_{1,j}+2e_{2,1}$ for all $1\leq j\leq b_1$, and
		
		\item $2e_{2,1}$
	\end{enumerate}
	that lie in $S_\Delta$ if $a_2\geq 2$. Now combining (2) and (3) we get $e_{1,j}\in G_\Delta$ for all $1\leq j\leq b_1$, and hence by using (1), we obtain $e_{2,j}\in G_\Delta$ for all $1\leq j\leq b_2$, showing $G_\Delta=\Z^n$. If $a_2=1$ then the vectors of the form $e_{i_1,j_1}+e_{i_2,j_2}$ $(i_1\not= i_2)$ are the only generators of $S_\Delta$, and hence $G_\Delta=\{x\in\Z^n~:~\sum_j x_{1,j}=\sum_j x_{2,j}\}\neq \Z^n$.
	\medskip
	
	{Case III. $k=1$}
	
		Consider the set of vectors
	\begin{enumerate}
		\item $2e_{1,j}$ for all $1\leq j\leq b$, and
		
		\item $3e_{1,j}$ for all $1\leq j\leq b$
	\end{enumerate}
	that lie in $S_\Delta$ if $a\geq 3$. Combining (1) and (2), we get $e_{1,j}\in G_\Delta$ for all $1\leq j\leq b$, showing $G_\Delta=\Z^n$. If $a=2$ then all generators are of the form $e_{1,j_1}+e_{1,j_2}$, i.e. with the sum of the coordinates equal to two. It can be easily seen that they generate $$G_\Delta=\{x\in\Z^n~:~2\mid\sum_j x_{1,j}\}.$$ Finally, the case when $a=1$ is trivial.
\end{proof}

\begin{cor}\label{dim}
The tangential variety of the Segre-Veronese variety is of expected dimension $2n$, except of the case $k=2,\mathbf a=(1,1)$ when its dimension is $2n-1$ and the case $k=1,a=1$ when its dimension is $n$.
\end{cor}
\begin{proof}
The dimension of the toric variety $T_\Delta$ is the same as the dimension of the lattice $G_\Delta$, thus it follows directly from Lemma \ref{group}. Note that the dimension of the tangential variety is $2n$ also in the case (ii) from Lemma \ref{group}, although the group $G_\Delta$ is not equal to $\Z^n$ (instead it is isomorphic to $\Z^n$). 
\end{proof}

\begin{rem}
We suspect that the dimension of the tangential variety to the Segre-Veronese variety was already known since it can probably be derived by using Terracini Lemma~\cite{terracini1911sulle}. However, for the lack of reference, we also stated it in the form of Corollary~\ref{dim}. 
\end{rem}
 
We now have the description of the cone $C_\Delta$.
 
\begin{lema}\label{cone}
	The cone $C_\Delta$ is defined by the following set of inequalities:
	\begin{enumerate}
		\item $x_{i,j}\ge 0$ for all $(i,j)\in \mathcal I$, and
		
		\item $\sum_j x_{i,j}\le \sum_{l\neq i}\sum_j x_{l,j}$ for all $i$ such that $a_{i}=1$. 
		
	\end{enumerate}
\end{lema}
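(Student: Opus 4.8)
\emph{Proof idea.} Write $C\subseteq\Q^n_{\ge 0}$ for the cone cut out by the inequalities (1) and (2) of the statement; the plan is to prove the two inclusions $C_\Delta\subseteq C$ and $C\subseteq C_\Delta$ separately, where $C_\Delta=\R_{\ge 0}S_\Delta$ is the cone spanned by the semigroup.

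The inclusion $C_\Delta\subseteq C$ is the easy one. Since $C$ is convex, it suffices to check that every minimal generator $g\in S_\Delta$ recalled above (that is, $g\in\N^n$ with $\sum_j g_{i,j}\le a_i$ for all $i$ and $\sum_{l,j}g_{l,j}\ge 2$) satisfies (1) and (2). Inequality (1) holds because $g\in\N^n$, and for (2) we fix $i$ with $a_i=1$ and note $\sum_j g_{i,j}\le 1$, whence $\sum_{l\ne i}\sum_j g_{l,j}=\big(\sum_{l,j}g_{l,j}\big)-\sum_j g_{i,j}\ge 2-1\ge\sum_j g_{i,j}$, which is (2).

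For the reverse inclusion I would argue through extreme rays: $C$ lies in the orthant $\R^n_{\ge 0}$, so it is pointed and equals the conical hull of its finitely many extreme rays, and it is enough to show that each extreme ray is contained in $C_\Delta$. The main step is thus a classification of the extreme rays of $C$. I would first rewrite (2) as $2\sigma_i(x)\le\Sigma(x)$, where $\sigma_i(x)=\sum_j x_{i,j}$ and $\Sigma(x)=\sum_{l,j}x_{l,j}$, and then apply the rank criterion for $1$-dimensional faces: if $v$ spans an extreme ray and has exactly $s$ nonzero coordinates, the coordinate equalities $x_{i,j}=0$ off the support already contribute rank $n-s$ to the system of tight defining inequalities at $v$, so $v$ must make $s-1$ further, independent, balance inequalities tight. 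The two elementary facts that drive the classification are: (a) the balance inequality for $i$ is tight at $v\ne 0$ only if $\sigma_i(v)>0$; and (b) if it is tight for two distinct indices $i_1,i_2$, then $\sigma_{i_1}(v)=\sigma_{i_2}(v)=\tfrac12\Sigma(v)$, so the support of $v$ lies entirely in blocks $i_1$ and $i_2$, and on the span of that support every tight balance inequality collapses to the single relation $\sigma_{i_1}(x)=\sigma_{i_2}(x)$. Feeding this into the rank count forces $s\le 2$ and shows that, up to positive scaling, the extreme rays are exactly $e_{i,j}$ with $a_i\ge 2$ (when $a_i=1$, $e_{i,j}$ fails (2), so it is not even in $C$) together with $e_{i_1,j_1}+e_{i_2,j_2}$ for $i_1\ne i_2$ with $\min\{a_{i_1},a_{i_2}\}=1$. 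Finally each of these lies in $C_\Delta$: when $a_i\ge 2$ the vector $2e_{i,j}$ is a generator of $S_\Delta$, and for any $i_1\ne i_2$ the vector $e_{i_1,j_1}+e_{i_2,j_2}$ is a generator of $S_\Delta$. Hence $C\subseteq C_\Delta$, and together with the first inclusion $C=C_\Delta$.

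I expect the genuine obstacle to be the extreme-ray classification, i.e.\ controlling which subsets of the balance inequalities $2\sigma_i(x)\le\Sigma(x)$ can be simultaneously tight with rank exactly $n-1$; facts (a) and (b) above are what keep that case analysis short, since (b) traps the support inside at most two blocks. One could instead bypass extreme rays altogether and decompose an arbitrary rational $v\in C$ directly into a nonnegative combination of the generators $2e_{i,j}$ and $e_{i_1,j_1}+e_{i_2,j_2}$, balancing the mass contained in the blocks with $a_i=1$ against the mass elsewhere, but this bookkeeping is essentially the same count.
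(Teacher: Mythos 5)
Your argument is correct, but it takes a genuinely different route from the paper. You prove $C\subseteq C_\Delta$ by classifying the extreme rays of the pointed polyhedral cone $C$ (via the rank count and your facts (a)--(b), which correctly force the support size $s\le 2$ and pin the rays down to $e_{i,j}$ with $a_i\ge 2$ and $e_{i_1,j_1}+e_{i_2,j_2}$ with $i_1\ne i_2$, $\min\{a_{i_1},a_{i_2}\}=1$, each of which lies on a degree-two generator of $S_\Delta$). The paper instead proves the integral statement you mention as an alternative at the end: every lattice point of $C$ with even coordinate sum is a sum of generators of $S_\Delta$ of coordinate sum two, by induction on half the coordinate sum, using the observation that at most two balance inequalities can be tight to choose a generator $e_{i_1,j_1}+e_{i_2,j_2}$ whose subtraction stays in $C$. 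Your polyhedral argument is cleaner for the cone equality itself, but it only yields the equality over $\Q$; the paper's decomposition is strictly stronger and is explicitly reused later (in the proof of Lemma~\ref{a=(1,2)}(i) one needs that all lattice points of $C_\Delta$ with even coordinate sum actually lie in $S_\Delta$), so with your route that fact would have to be established separately.
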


\begin{proof}
Let $C$ be the cone defined by the above inequalities $(1) - (2)$. It is easy to check that all generators of $S_\Delta$ lie in $C$ so $C_\Delta\subseteq C$. To prove the other inclusion, consider a point $x\in C\cap\Z^n$. It is sufficient to show that $2x\in C_\Delta$.
We, in fact, show a more general statement: any point $y=(y_{i,j})\in C\cap \mathbb Z^n$ with even sum of coordinates can be written as a sum of generators $(x_{i,j})$ of $S_\Delta$ with $\sum_{i,j} x_{i,j}=2$. We only need to prove this statement since it implies the lemma.

We denote $\sum_{i,j}y_{i,j}=2m$ and prove the statement by induction on $m$. For $m=0$ it is true. Consider the case $m>0$. If there exists an index $i_0$ such that all non-zero coordinates of $y$ are in the form $(i_0,j)$ then inequality $(2)$ for $i_0$ implies $a_{i_0}\neq 1$. Therefore $e_{i_0,j_1}+e_{i_0,j_2}$ are generators of $S_\Delta$ for all $1\le j_1,j_2\le b_{i_0}$. We can easily write $y$ as a sum of $m$ such generators.

Otherwise, we look at the inequalities $(2)$ for the point $y$ in which an equality holds, i.e. the point $y$ lies on the corresponding face of $C$. Since the inequalities $(2)$ is equivalent with
$$\sum_j y_{i,j}\le \frac 12 \sum_{i,j} y_{i,j}=m,$$
therefore we can have at most two indices $i$ for which the equality holds. Moreover, for every index $i$ for which the equality holds there exists a pair $(i,j)$ such that $y_{i,j}>0$ since $\sum_{j} y_{i,j}=m>0$.
So we pick two pairs $(i_1,j_1),(i_2,j_2)$ with $i_1\neq i_2$ such that $y_{i_1,j_1},y_{i_2,j_2}>0$, and for every index $i$ for which there is an equality in $(2)$ we have $i\in\{i_1,i_2\}$. This is clearly possible since there are at most two such indices $i$. The point $p=e_{i_1,j_1}+e_{i_2,j_2}$ is a generator of $S_\Delta$ and we claim that $z=y-p\in C$. We show that by checking all inequalities $(1)-(2)$. 

The inequalities $(1)$ obviously hold for the point $z$, hence we have to check the inequalities $(2)$ for every $1\le i\le k.$ Note that the inequalities $(2)$ for point $z$ are equivalent with
$$\sum_j z_{i,j}\le \frac 12 \sum_{i,j} z_{i,j}=m-1.$$
We distinguish two cases. 

\medskip
{Case I. $\sum_j y_{i,j} < m$.} 

In this case we have $$\sum_j z_{i,j}\le \sum_j y_{i,j}\le m-1,$$
and therefore the inequalities hold.

\medskip
{Case II. $\sum_j y_{i,j}=m$.}

By our definition of point $p$ we have $$\sum_j z_{i,j}= \sum_j y_{i,j}-\sum_j p_{i,j}= m-1.$$

We can conclude that $z\in C$ and $\sum_{i,j} z_{i,j}=2m-2$. By induction hypothesis we can write $z=y-p$ as a sum of generators which shows that $y$ can be written as sum of generators of $S_\Delta$ as well. This completes the proof.
\end{proof}

In order to employ Theorem~\ref{hoa-trung}, we require the description of the facets of the cone $C_\Delta$. To this end, let us first recall the following from Notations~\ref{not-goren}:
\begin{itemize}
	\item [(a)] $F_{i,j}=\{(x_{i,j})~:~ x_{i,j}=0\}$ for $(i,j)\in \mathcal I$, and
	
	\item [(b)] $F_i=\{(x_{i,j})~:~ \sum_j x_{i,j}= \sum_{l\neq i}\sum_j x_{l,j}\}$ for $1\leq i\leq k$.
\end{itemize}

We have the facet description of the cone $C_\Delta$:
\begin{lema}\label{cone-facets}

\begin{enumerate}
\item $F_{i,j}$ defines a facet of $C_\Delta$, unless:

\begin{itemize}
	\item [(i)] $k=3$, $a_1=a_2=1$, $a_3\geq2$ and $b_3=1$, when $F_{3,1}$ is not a facet,
	
	\item [(ii)] $k=3$, $a_1=a_2=a_3=1$ and $b_i=1$ for some $i$, when $F_{i,1}$ is not a facet,
	
	\item [(iii)] $k=2$, $a_1=1, a_2\geq2$, $b_2=1$, when $F_{2,1}$ is not a facet,
	
	\item [(iv)] $k=2$, $a_1=a_2=1$, when $F_{i,j}$ is not a facet for every $(i,j)\in \mathcal I$, or
	
	\item [(v)] $k=1$, $a=1$, when $C_\Delta=\{0\}$.
\end{itemize}

\item $F_i$ defines a facet of $C_\Delta$ for all $i$ such that $a_{i}=1$, unless:
\begin{itemize}
	\item [(i)] $k=2$, $a_1=a_2=1$, when $F_1=F_2=C_\Delta$, or
	
	\item [(ii)] $k=1$, $a=1$, when $C_\Delta=\{0\}$. 
\end{itemize} 
\end{enumerate}
\end{lema}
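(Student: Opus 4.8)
The plan is to extract the facets of $C_\Delta$ directly from the halfspace presentation supplied by Lemma~\ref{cone}: $C_\Delta$ is cut out by the inequalities $x_{i,j}\ge 0$ for $(i,j)\in\mathcal I$ together with the inequalities $F_i$, namely $\sum_j x_{i,j}\le\sum_{l\ne i}\sum_j x_{l,j}$, one for each index $i$ with $a_i=1$. For a polyhedral cone given by finitely many inequalities, one of them determines a facet exactly when it is irredundant, and I will decide irredundancy by the two complementary tests: to show a hyperplane is facet-defining I will exhibit a point of $C_\Delta$ lying on it but making every other defining inequality strict, so that the corresponding face has dimension $r-1$; to show it is not facet-defining I will write the inequality as a nonnegative combination of the others, so that $C_\Delta$ already lies in that halfspace --- or, in the degenerate situations, in the hyperplane itself. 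Throughout, $r=\dim C_\Delta=\rank_\Z G_\Delta$ is read off from Lemma~\ref{group}, and in the generic case $G_\Delta=\Z^n$ so that $C_\Delta$ is full dimensional.

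For part (1) the witness showing that $\{x_{i,j}=0\}$ is a facet will be assembled as a sum of generators of $S_\Delta$ --- lattice points satisfying conditions (1) and (2) of the semigroup description --- none of which carries the label $t_{i,j}$, chosen so that every coordinate other than $x_{i,j}$ becomes positive and every inequality $F_l$ becomes strict; for the shapes outside the listed exceptions one can take a suitably symmetric sum of degree-two generators $e_{i,j}+e_{i',j'}$ indexed by surjections between the blocks. A short combinatorial check shows that such a family of generators fails to exist only in the configurations (i)--(iii), and there the excluded inequality is in fact redundant: in (iii), where $k=2$, $a_1=1$, $b_2=1$, the inequality $F_1$ reads $\sum_j x_{1,j}\le x_{2,1}$, which together with $x_{1,j}\ge 0$ already forces $x_{2,1}\ge 0$; in (i) and (ii), where $k=3$ and $b_3=1$, adding $F_1$ and $F_2$ produces $0\le 2x_{3,1}$, so $F_{3,1}$ is redundant. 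The remaining special situations are genuinely degenerate: when $k=1$ and $a=1$ there are no simplices of dimension at least one, so $S_\Delta$ is trivial and $C_\Delta=\{0\}$ (part (v), and also part (2)(ii)), while for $k=2$, $\mathbf a=(1,1)$ the cone $C_\Delta$ lies in the proper subspace $\{x:\sum_j x_{1,j}=\sum_j x_{2,j}\}$ spanned by $G_\Delta$, and the hyperplanes $\{x_{i,j}=0\}$ are analyzed directly against this subspace, using that $x_{i,1}=0$ together with $F_i$ collapses the corresponding block.

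For part (2) the same strategy applies to the inequalities $F_i$ with $a_i=1$. When $k\ge 3$ (or $k=2$ with exactly one $a_i$ equal to $1$) a point on $F_i$ obtained by summing degree-two generators distributed surjectively across the remaining blocks lies in the relative interior of the face $C_\Delta\cap F_i$ and is strict on all other inequalities, so $F_i$ is a facet; here $G_\Delta=\Z^n$ by Lemma~\ref{group}, which keeps the dimension count clean. The single exception is $k=2$ with $a_1=a_2=1$: then $F_1$ gives $\sum_j x_{1,j}\le\sum_j x_{2,j}$ and $F_2$ gives the reverse inequality, so together they are equivalent to the equation defining $\mathrm{span}_{\mathbb R}(G_\Delta)$; consequently $C_\Delta\subseteq F_1$ and $C_\Delta\subseteq F_2$, that is $F_1=F_2=C_\Delta$, and neither is a facet.

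The main obstacle is the bookkeeping: organizing the construction of the relative-interior witness point uniformly over all shapes $(k,\mathbf a,\mathbf b)$ and verifying that the configurations in which the construction breaks down are exactly those listed, together with spotting, in each such case, the correct nonnegative combination of the remaining defining inequalities that exhibits the omitted one as redundant. The non-generic cases $\mathbf a=(1,1)$ and $a=1$, where $G_\Delta$ is not all of $\Z^n$, need to be isolated and treated by hand.
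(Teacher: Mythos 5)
Your route is genuinely different from the paper's. For part (1) the paper does not test irredundancy of the inequality description at all: it observes that the face $C_\Delta\cap\{x_{i_0,j}=0\}$ is itself the cone of a smaller Segre--Veronese datum (drop one copy of $\P^{b_{i_0}}$, i.e.\ replace $b_{i_0}$ by $b_{i_0}-1$, or drop the $i_0$-th factor entirely when $b_{i_0}=1$), and then reads the dimension of that smaller cone off Lemma~\ref{group}; the exceptional list (i)--(v) is exactly the list of parameters for which the smaller datum degenerates. That reduction is shorter and avoids the witness-point bookkeeping entirely. For part (2) the two arguments are essentially the same (the paper exhibits $n-1$ independent lattice points on $F_1$ rather than one relative-interior point), and your observation that $F_1$ and $F_2$ cut out the span of $G_\Delta$ when $\mathbf a=(1,1)$ matches the paper's (2)(i).

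There are, however, two concrete gaps. First, the entire positive half of part (1) --- ``a suitably symmetric sum of degree-two generators indexed by surjections between the blocks'' together with ``a short combinatorial check shows that such a family fails to exist only in (i)--(iii)'' --- is the actual content of the lemma and is not supplied; as written this is a plan, not a proof. Second, and more seriously, your treatment of exception (1)(iv) does not work: the mechanism you invoke ($x_{i,1}=0$ together with $F_i$ ``collapses the block'') only forces the face to degenerate when $b_i=1$. For $k=2$, $\mathbf a=(1,1)$ and $b_1,b_2\ge 2$, the cone $C_\Delta$ is the cone over the product of simplices $\Delta_{b_1-1}\times\Delta_{b_2-1}$ inside the hyperplane $\sum_j x_{1,j}=\sum_j x_{2,j}$, and the face $C_\Delta\cap\{x_{1,1}=0\}$ is the cone over $\Delta_{b_1-2}\times\Delta_{b_2-1}$, which has dimension exactly $r-1$; so your argument cannot establish (iv) in that range, and you need to confront this case head-on (note that the paper's own reduction here tests whether the smaller cone is full-dimensional in $\Q^{n-1}$, whereas the relevant criterion in this non-full-dimensional situation is whether its dimension drops by exactly one relative to $\dim C_\Delta=r$). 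A smaller logical point: your opening principle ``an inequality determines a facet exactly when it is irredundant'' needs the supplement that, when you exhibit an inequality as a nonnegative combination of at least two others, the corresponding face is contained in the intersection of two distinct facets and hence has codimension at least two; your explicit combinations for (i)--(iii) do deliver this, but the principle as stated is not quite the right criterion, especially once $C_\Delta$ fails to be full-dimensional.
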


\begin{proof}
If $b_{i_0}\ge 2$, we show that $F_{i_0,1}$ forms a facet (the same proof applies to every $F_{i_0,j}$). We consider another semigroup $S_{\Delta'}$ which corresponds to $(\mathbf{a'},\mathbf{b'})$, where $\mathbf{a'}=\mathbf{a},$ $b'_{i_0}=b_{i_0}-1$ and $b'_i=b_i$ for all $i\not=i_0$. Moreover, we get coordinates of $S_{\Delta'}$ from those of $S_\Delta$ by skipping the coordinate $x_{i_0,1}$. Now there is a trivial bijection between the points of $S_\Delta$ which satisfy $x_{i_0,1}=0$ and the points of $S_{\Delta'}$. Therefore, $F_{i_0,1}$ defines a facet if and only if $S_{\Delta'}$ is full-dimensional, i.e. when $k=1, a\not=1$, or $k=2, \mathbf a'\not=(1,1)$, or $k\geq 3$, showing a part of $(iv)-(v)$ of the statement~(1).
If $b_{i_0}=1$ and $k\geq2$ (the case $k=1$ is obvious), we use the same argument as above for $k'=k-1$, $\mathbf{a'}=(a_1,\dots, \widehat{a_{i_0}}, \dots, a_k)$ and $\mathbf{b'}=(b_1,\dots, \widehat{b_{i_0}}, \dots, b_k)$, where $\widehat{\cdot}$ means we skip the corresponding coordinate. It gives us $(i)-(iii)$ and remaining part of $(iv)-(v)$ of the statement~$(1)$. This finishes the proof of (1).

To show that $F_{1}$ also forms a facet when $a_1=1$ (the same proof applies to every $F_i$), we need to find $n-1$ linearly independent points lying on it. We take the following points
\begin{enumerate}
	\item $e_{1,1}+e_{i,j}$ for all $(i,j)\in \mathcal I$ with $i>1$, and
	
	\item $e_{1,j}+e_{2,1}$ for all $1< j\leq b_1$.
\end{enumerate}
This also shows (i) of the statement~(2). Part (ii) is obvious and so this finishes the proof of (2).
\end{proof}

The following result tells us about holes inside $S_\Delta$.
\begin{lema}\label{holes}
\begin{itemize}
\item [(i)] If $a_i>2$, then the point $e_{i,j}$, for all $1\leq j\leq b_i$, belongs to $(C_\Delta\cap \Z^n)\setminus S_\Delta$, and

\item [(ii)] if $a_i=2$, then the points $e_{i,j}$, for all $1\leq j\leq b_i$, and $\sum_j c_je_{i,j}$ with $(c_j)\in \N^{b_i}$ such that $2\nmid\sum_j c_j$ belong to $(C_\Delta\cap \Z^n)\setminus S_\Delta$, unless $k=1$, $a=2$.
\end{itemize}
In other words, we have holes inside $S_\Delta$.

\end{lema}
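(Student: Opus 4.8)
The plan is to leverage two elementary observations about the semigroup $S_\Delta$, together with the inequality description of the cone $C_\Delta$ from Lemma~\ref{cone}. The first observation is that every nonzero element of $S_\Delta$ has coordinate sum at least $2$: it is a nonempty sum of generators of $S_\Delta$, and each generator has coordinate sum $\ge 2$ by the defining condition $\sum_{i,j}x_{i,j}\ge 2$. The second observation is that a generator of $S_\Delta$ whose support is contained in a single block $i$ with $a_i=2$ has coordinate sum \emph{exactly} $2$, because the defining condition $\sum_j x_{i,j}\le a_i=2$ caps it from above while the first observation caps it from below. Cone membership will be checked directly against Lemma~\ref{cone}.

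For part (i) I would fix $i$ with $a_i>2$ and $1\le j\le b_i$ and check that $e_{i,j}\in C_\Delta\cap\Z^n$: the nonnegativity inequalities of Lemma~\ref{cone}(1) are immediate, and the inequalities of Lemma~\ref{cone}(2) concern only indices $i'$ with $a_{i'}=1$; since $a_i>2$ forces $i'\neq i$, for each such $i'$ the left-hand side $\sum_{j'}(e_{i,j})_{i',j'}$ is $0$ and the inequality reads $0\le 1$. Hence $e_{i,j}\in C_\Delta\cap\Z^n$, while its coordinate sum is $1$, so the first observation gives $e_{i,j}\notin S_\Delta$.

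For part (ii) I would fix $i$ with $a_i=2$ and assume we are not in the case $k=1$, $a=2$. The point $e_{i,j}$ lies in $C_\Delta\cap\Z^n$ by the identical verification and is excluded from $S_\Delta$ because its coordinate sum is $1$. For $p=\sum_j c_je_{i,j}$ with $(c_j)\in\N^{b_i}$ and $\sum_j c_j$ odd, the same cone computation shows $p\in C_\Delta\cap\Z^n$ (all nonzero coordinates of $p$ lie in block $i$, so again the Lemma~\ref{cone}(2) inequalities only involve blocks $i'\neq i$). To rule out $p\in S_\Delta$, suppose $p=\sum_\ell g_\ell$ is a decomposition into generators; since $p$ is supported in block $i$ and the $g_\ell$ have nonnegative entries, each $g_\ell$ must be supported in block $i$, hence has coordinate sum exactly $2$ by the second observation, and so the coordinate sum of $p$, which equals $\sum_j c_j$, is even, contradicting $2\nmid\sum_j c_j$.

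Finally I would address the exclusion of $k=1$, $a=2$: there $G_\Delta=\{x\in\Z^n~:~2\mid\sum_j x_{1,j}\}$ by Lemma~\ref{group}(ii), so the odd-sum points above are not even elements of $G_\Delta$ and hence are not genuine holes (in fact $S_\Delta=C_\Delta\cap G_\Delta$ is normal here, cf.\ Example~\ref{examples}(6)). I expect the parity argument in part (ii) to be the main point requiring care: one must confirm that a decomposition of a block-$i$-supported lattice point into generators uses only block-$i$-supported generators, and it is precisely the hypothesis $a_i=2$ — not merely $a_i\ge 2$ — that pins each such generator's coordinate sum to $2$ and makes the parity obstruction effective.
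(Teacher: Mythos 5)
Your proof is correct, and it takes the only natural route: the paper's own proof consists of the single sentence that all listed points ``clearly'' belong to $(C_\Delta\cap\Z^n)\setminus S_\Delta$, and your direct verification (cone membership via Lemma~\ref{cone}, exclusion from $S_\Delta$ via the coordinate-sum bound $\ge 2$ on generators and the parity argument forced by $a_i=2$) is precisely what that ``clearly'' hides. Your closing remark on the excluded case $k=1$, $a=2$ --- that the odd-sum points still lie in $(C_\Delta\cap\Z^n)\setminus S_\Delta$ but fail to lie in $G_\Delta$ and so are not genuine holes --- is a careful and accurate reading of the lemma's intent that the paper does not spell out.
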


\begin{proof}
All of the points listed clearly belong to $(C_\Delta\cap \Z^n)\setminus S_\Delta$. 

\end{proof}

In the following lemma, we study the condition (i) of Theorem~\ref{hoa-trung}.
\begin{lema}\label{not-cm}
\begin{enumerate}
	\item In the case $a_k\ge 3$ we have $S_\Delta\neq S'_\Delta$, unless $k=1$, $b=1$.

\item In the case $a_k= 2$ we have $S_\Delta\neq S'_\Delta$, unless:
\begin{itemize}
	\item [(i)] $k=2$, $\mathbf a=(2,2)$, $\mathbf b= (1,1)$,
	
	\item [(ii)] $k=2$, $\mathbf a=(1,2)$, $b_1=1$, or
	
	\item [(iii)] $k=1$.
\end{itemize} 
\end{enumerate}
\end{lema}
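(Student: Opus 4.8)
The plan is to prove directly that $S_\Delta\neq S'_\Delta$ in each non-exceptional case; that equality holds in the exceptional families will be clear from Example~\ref{examples} and a short separate check. Since $S_\Delta\subseteq S_F$ for every facet $F$ (Notations~\ref{not-goren}), we always have $S_\Delta\subseteq S'_\Delta$, so it suffices, in each non-exceptional case, to exhibit one \emph{hole} $h\in(C_\Delta\cap G_\Delta)\setminus S_\Delta$ that nevertheless lies in $S_F$ for \emph{every} facet $F$. Three earlier results reduce this to finite bookkeeping: by Lemma~\ref{group}, $G_\Delta=\Z^n$ throughout, so all holes supplied by Lemma~\ref{holes} (namely $e_{k,1}$ when $a_k\geq3$, and $e_{k,1}$ as well as any $e_{i,j}$ with $a_i\geq2$ when $a_k=2$ and $k\geq2$) lie in $G_\Delta$; by Lemma~\ref{cone-facets}, every facet is of the form $F_{\ell,m}$ or $F_\ell$ with $a_\ell=1$; and since $S_\Delta$ is generated by the lattice points satisfying $(1)$ $\sum_j x_{i,j}\leq a_i$ and $(2)$ $\sum_{i,j}x_{i,j}\geq2$, to show $h\in S_F$ it is enough to produce $y_F\in S_\Delta\cap F$ with $h+y_F$ one of these generators.

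For part (1) I take $h=e_{k,1}$ with $a_k\geq3$. For a facet $F_\ell$ ($a_\ell=1$, hence $\ell\neq k$), the vector $y_F=e_{\ell,1}+e_{k,1}$ lies on $F_\ell$ and in $S_\Delta$, while $h+y_F=e_{\ell,1}+2e_{k,1}$ is a generator thanks to $a_k\geq3$. For a facet $F_{\ell,m}$ with $(\ell,m)\neq(k,1)$ and $n\geq3$, I pick $(p,q)\notin\{(k,1),(\ell,m)\}$ and set $y_F=e_{k,1}+e_{p,q}$, so that $h+y_F=2e_{k,1}+e_{p,q}$ is a generator; the facet $F_{k,1}$ itself is handled by $y_F=2e_{k,2}$ if $b_k\geq2$, and if $b_k=1$ by a generator supported outside block $k$, which exists unless we are in a configuration where $F_{k,1}$ is not a facet anyway (e.g. Lemma~\ref{cone-facets}(1)(iii)). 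The configurations with $n\leq2$ — that is, $k=1$ with $b\in\{1,2\}$ and $k=2$ with $b_1=b_2=1$ — are finitely many and checked by hand, using auxiliary generators such as $2e_{k,1}$, $2e_{1,1}$, $e_{1,1}+e_{k,1}$; the unique configuration in which the construction fails is $k=1$, $b=1$ (there the only facet is $F_{1,1}$, $S_\Delta\cap F_{1,1}=\{0\}$, and $e_{1,1}\notin S_\Delta$), which is the stated exception.

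For part (2) all $a_i\in\{1,2\}$, so inequality $(1)$ for block $k$ now \emph{forces} the block-$k$ weight of $h+y_F$ to equal $2$; this extra rigidity is what produces the additional exceptions. When $k\geq3$, when $k=2$ with $\mathbf a=(1,2)$ and $b_1\geq2$, and when $k=2$ with $\mathbf a=(2,2)$ where — relabelling the two equal-$a$ blocks if necessary, which is harmless — we may assume $b_1\geq2$, the hole $h=e_{k,1}$ still works: $F_\ell$ ($a_\ell=1$) via $y_F=e_{\ell,1}+e_{k,1}$, and $F_{\ell,m}$ via $y_F=e_{k,1}+e_{p,q}$ with $(p,q)\notin\{(k,1),(\ell,m)\}$, or else $y_F=2e_{1,1}$ or $y_F=e_{1,1}+e_{2,j'}$, always arranged so that the relevant coordinate of $y_F$ vanishes, the block-$k$ weight of $h+y_F$ is $2$, and $h+y_F$ is a generator. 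The configurations not reached by these arguments are exactly $k=1$ with $a=2$, $k=2$ with $\mathbf a=(2,2)$ and $\mathbf b=(1,1)$, and $k=2$ with $\mathbf a=(1,2)$ and $b_1=1$: the three stated exceptions, for which one checks $S'_\Delta=S_\Delta$ (respectively: $S_\Delta=C_\Delta\cap G_\Delta$, so the semigroup is normal; this is Example~\ref{examples}(1); and every hole has the form $(0;w)$ with $\sum_j w_j$ odd while every element of $S_\Delta\cap F_{1,1}$ has $\sum_j w_j$ even, so no hole can lie in $S_{F_{1,1}}$).

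The one genuinely delicate point is part (2): once all $a_i\leq2$ there is no slack in block $k$, so the naive witness $e_{k,1}$ can fail — most transparently in the family $\mathbf a=(2,2)$, $\mathbf b=(1,1)$, where in fact $S_\Delta=S'_\Delta$ — and this is what forces both the relabelling device in the $\mathbf a=(2,2)$ case and a careful use of Lemma~\ref{cone-facets} to know precisely which $F_{\ell,m}$ are facets in the small-$\mathbf b$ configurations. Beyond that, the argument is just the routine verification that each displayed $y_F$ satisfies $y_F\in S_\Delta\cap F$ and that $h+y_F$ obeys the two defining inequalities $(1)$ and $(2)$ of a generator of $S_\Delta$.
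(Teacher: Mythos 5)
Your proposal is correct and follows essentially the same route as the paper: the witness is $e_{k,1}$, shown to lie in every $S_F$ by exhibiting $y_F\in S_\Delta\cap F$ with $e_{k,1}+y_F$ a generator of $S_\Delta$, with the exceptional configurations emerging exactly where no such $y_F$ exists (or where the offending $F$ fails to be a facet by Lemma~\ref{cone-facets}). The only notable difference is that you absorb the subcase $k=2$, $\mathbf a=(2,2)$, $b_1=1$, $b_2\geq 2$ by relabelling the two blocks so the same witness applies, whereas the paper defers that subcase to the generalization of Example~\ref{examples}(2); the two witnesses are the same point.
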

\begin{proof}
We show that the point $e_{k,1} \in S'_\Delta\setminus S_\Delta$. Clearly $e_{k,1}\not\in S_\Delta$, cf. Lemma~\ref{holes}. It remains to show that for every facet $F_{i}$ and $F_{i,j}$ of $C_\Delta$ we have $e_{k,1}\in S_{i}$ resp. $e_{k,1}\in S_{i,j}$, which implies that $e_{k,1}\in S'_\Delta$. 

For $a_{k}\geq2$ and for any $i$ such that $a_{i}=1$ we have $e_{k,1}+(e_{i,1}+e_{k,1})\in S_\Delta$ and $e_{i,1}+e_{k,1}\in S_\Delta\cap F_{i}$, therefore $e_{k,1}\in S_{i}$.
To show $e_{k,1}\in S_{i,j}$ we need to consider the cases $a_k>2$ and $a_k=2$ separately.
\bigskip

{Case I. $a_k>2$}

For any $(i,j)\neq (k,1)$ we have $e_{k,1}+2e_{k,1}\in S_\Delta$ and $2e_{k,1}\in S_\Delta\cap F_{i,j}$ which implies $e_{k,1}\in S_{i,j}$. For $S_{k,1}$, again we want to find a point $x\in S_\Delta\cap F_{k,1}$ such that $e_{k,1}+x\in S_\Delta$. We look at several cases:
\begin{itemize}
\item If $b_k\ge 2$ we can take $x=2e_{k,2}$.
\item If $a_{i}\ge 2$ for some $i\neq k$ we can take $x=2e_{i,1}$.
\item If $a_1=a_2=1$ we can take $x=e_{1,1}+e_{2,1}$.
\end{itemize}
We are left with the cases $k=1, b=1$ and $k=2, \mathbf{a}=(1,a_2), b_2=1$. However, in our statement we do not consider the first case and in the second case $F_{k,1}$ is not a facet by Lemma~\ref{cone-facets}, so we are done. This concludes Case I.
\bigskip

{Case II. $a_k=2$}

For the facet $F_{i_0,j_0}$ with $i_0\neq k$ we consider any pair $(i,j)\neq (i_0,j_0)$ with $i\neq k$. Note that such a pair does exist, unless $k=2$ and $b_1=1$. Then we have $e_{k,1}+(e_{k,1}+e_{i,j})\in S_\Delta$ with $e_{k,1}+e_{i,j}\in S_\Delta\cap F_{i_0,j_0}$, which implies that $e_{k,1}\in S_{i_0,j_0}$.
For the facet $F_{k,j}$ we again need to find a point $x\in S_\Delta\cap F_{k,j}$ such that $x+e_{k,1}\in S_\Delta$. We consider several cases:

\begin{itemize}
	\item If $a_{i}\ge 2$ for some $i\neq k$, then $x=2e_{i,1}$.
	\item If $a_1=a_2=1$, then $x=e_{1,1}+e_{2,1}$.
	\item If $\mathbf a=(1,2)$ and $b_2\ge 2$, then $x=e_{1,1}+e_{2,j_2}$ for some $j_2\neq j$.
\end{itemize}

To sum up we always can find such $x$, unless $k=1, a=2$ or $k=2, \mathbf a=(1,2), b_2=1$. The first case is excluded in the statement and in the second case $F_{2,1}$ is not a facet by Lemma~\ref{cone-facets} so we covered all cases. This concludes Case II, and hence completes the proof.
\end{proof}

In the following lemma, we study a special case of $k=2$.
\begin{lema}\label{a=(1,2)}
If $k=2$, $\mathbf{a}=(1,2)$, $b_1=1,b_2\ge 2$ then:
\begin{itemize}
\item[(i)] $S_\Delta=(C_\Delta\cap \Z^n)\setminus\{x\in C_\Delta\cap\Z^n~:~x_{1,1}=0, 2\nmid \sum x_{2,j}\}$,
\item[(ii)] $S_\Delta=S'_\Delta$,
\item[(iii)] for any proper subset $J$ of $\mathcal F$, $\pi_J$ is acyclic, unless $J=\{F_1,F_{1,1}\}$ or $J=\mathcal F\setminus \{F_1,F_{1,1}\}$, and
\item[(iv)] for $J=\{F_1,F_{1,1}\}$ or $J=\mathcal F\setminus \{F_1,F_{1,1}\}$, we have $G_J=\emptyset$. 
\end{itemize}
\end{lema}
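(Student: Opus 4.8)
The plan is to treat each of the four assertions by direct semigroup computation, using the explicit description of $S_\Delta$, the cone $C_\Delta$ from Lemma~\ref{cone}, and the facet list from Lemma~\ref{cone-facets}. First I would establish (i). By Lemma~\ref{cone} the cone $C_\Delta$ for $k=2$, $\mathbf a=(1,2)$ is cut out by $x_{1,1},x_{2,j}\ge 0$ and $x_{1,1}\le \sum_j x_{2,j}$ (only $a_1=1$ contributes the second type of inequality). The semigroup $S_\Delta$ is generated by the lattice points in this cone with coordinate sum $\ge 2$ satisfying $x_{1,1}\le 1$ and $\sum_j x_{2,j}\le 2$; so the generators are $2e_{2,j}$, $e_{2,j}+e_{2,l}$, $e_{1,1}+e_{2,j}$, and $e_{1,1}+e_{2,j}+e_{2,l}$. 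A point $y\in C_\Delta\cap\Z^n$ with $y_{1,1}\ge 1$ is reached by peeling off copies of $e_{1,1}+e_{2,j}$ (using $y_{1,1}\le\sum_j y_{2,j}$ to guarantee enough room in the second block) and then filling the remaining purely-second-block part with the $2e_{2,j}$ and $e_{2,j}+e_{2,l}$ generators; this is possible precisely when the remaining sum in the second block is even, which it is once $y_{1,1}$ has the correct parity forced by the coordinate sum. When $y_{1,1}=0$ the purely-second-block points lie in $S_\Delta$ iff $\sum_j y_{2,j}$ is even. The one subtlety is the small boundary cases (e.g.\ $y_{1,1}=\sum_j y_{2,j}$), which I would handle exactly as in the proof of Lemma~\ref{cone}; a clean induction on $\sum_j y_{2,j}-y_{1,1}$ does the job.

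Next, (ii). The facets here are $F_{1,1}$, $F_{2,j}$ for $1\le j\le b_2$, and $F_1$ (note $F_{2,1}$ \emph{is} a facet since $b_2\ge 2$, and $F_1$ is a facet by Lemma~\ref{cone-facets}(2) as $a_1=1$, while $F_2$ is not since $a_2=2$). I must show $S'_\Delta=\bigcap_F S_F\subseteq S_\Delta$. By Lemma~\ref{holes} the only candidate holes are the points with $x_{1,1}=0$ and $\sum_j x_{2,j}$ odd. Take such a hole $h$ with $h\in S_F$ for every facet $F$; I would derive a contradiction from membership in $S_{1,1}$ specifically: $S_{1,1}$ consists of points $x$ with $x_{1,1}>0$, together with points with $x_{1,1}=0$ that can be translated by an element of $S_\Delta\cap F_{1,1}$ into $S_\Delta$ — but $S_\Delta\cap F_{1,1}$ consists of second-block points of even sum, so adding one to $h$ keeps the second-block sum odd, hence stays outside $S_\Delta$. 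Thus no such hole lies in $S_{1,1}$, giving $S'_\Delta=S_\Delta$.

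For (iii) and (iv) I would go through the subsets $J\subseteq\mathcal F$. The key structural fact is which pairs of facets share a nonzero lattice point of $S_\Delta$: $F_{1,1}$ and $F_1$ intersect $S_\Delta$ only in $\{0\}$ (since $x_{1,1}=0$ and $x_{1,1}=\sum_j x_{2,j}$ force the second block to vanish), whereas every other pair of facets has a common nonzero point of $S_\Delta$ (e.g.\ $F_{2,j}\cap F_{2,l}$ contains $e_{1,1}+e_{2,m}$ for suitable $m$, $F_{1,1}\cap F_{2,j}$ contains $2e_{2,l}$ for $l\ne j$, $F_1\cap F_{2,j}$ contains $e_{1,1}+e_{2,l}$ with $l\ne j$). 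Consequently $\pi_J$ is the full simplex on $J$ — hence acyclic — unless $J$ contains both $F_1$ and $F_{1,1}$, in which case those two vertices are never in a common face; the only $J$ for which this disconnects $\pi_J$ into a genuinely non-acyclic complex are $J=\{F_1,F_{1,1}\}$ and, on the complementary side of the Alexander-type symmetry in Theorem~\ref{hoa-trung}, $J=\mathcal F\setminus\{F_1,F_{1,1}\}$ (here $\pi_J$ is a cone over the simplex on $\{F_{2,j}\}$ with apex issues, or rather is contractible except for the obstruction coming from the missing link — I would just compute $\tilde H_*$ directly). For these two exceptional $J$, (iv) asks that $G_J=\emptyset$: in the first case $G_J=S_{2,1}\cap\cdots\cap S_{2,b_2}\cap(S_\Delta\text{-side not excluded})\setminus(S_{1,1}\cup S_1)$, and I would show any $x$ outside $S_{1,1}$ and $S_1$ has $x_{1,1}\le 0$ and $x_{1,1}>\sum_j x_{2,j}$ with parity/sign constraints that are incompatible with lying in all the $S_{2,j}$; the complementary case is dual.

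The step I expect to be the main obstacle is (iv), specifically pinning down $G_J$ for $J=\mathcal F\setminus\{F_1,F_{1,1}\}$: here $G_J=S_{1,1}\cap S_1\setminus\bigcup_j S_{2,j}$, and one must show that every $x$ lying in both $S_{1,1}$ and $S_1$ already lies in some $S_{2,j}$. Membership in $S_{1,1}\cap S_1$ should force $\sum_j x_{2,j}\ge x_{1,1}\ge$ (something mildly positive or zero with even parity), and then an averaging/pigeonhole argument on the $b_2$ coordinates $x_{2,j}$, combined with the explicit description of $S_{2,j}$ as $\{x_{2,j}\ge 0\}$ (its clean form because $a_2=2\ge 2$ makes the relevant translates available), yields $x\in S_{2,j}$ for the index $j$ maximizing $x_{2,j}$. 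Getting the parity bookkeeping and the small-$b_2$ edge cases right in that argument is the delicate part; everything else is routine verification against the generator list.
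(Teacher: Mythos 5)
Your overall route—explicit generator lists, explicit descriptions of the $S_F$'s, and case-by-case checks of the $\pi_J$'s—is the same as the paper's, and your arguments for (ii) and (iv) are essentially the ones given there (in particular, the ``main obstacle'' you flag in (iv) reduces to the one-line observation that $x\in S_{1,1}$ forces $x_{1,1}\ge 0$ while $x_{2,j}<0$ for all $j$ forces $x_{1,1}>\sum_j x_{2,j}$, i.e.\ $x\notin S_1$; your pigeonhole version of this is fine). The genuine gap is in (i). Your decomposition peels off copies of $e_{1,1}+e_{2,j}$ and then fills the remaining pure-second-block vector with the degree-two generators $2e_{2,j}$, $e_{2,j}+e_{2,l}$. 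After peeling $y_{1,1}$ such copies the second-block remainder has sum $\sum_j y_{2,j}-y_{1,1}$, whose parity equals that of the \emph{total} coordinate sum of $y$; for a point such as $y=e_{1,1}+2e_{2,1}$ (total sum $3$, $y_{1,1}=1>0$, which statement (i) asserts lies in $S_\Delta$) this remainder is odd and cannot be written with even-degree generators. So the sentence ``which it is once $y_{1,1}$ has the correct parity forced by the coordinate sum'' is not a property of $y$ you can assume: odd-sum points with $y_{1,1}>0$ must be covered, and your scheme as written never invokes the degree-three generators $e_{1,1}+e_{2,j}+e_{2,l}$ that you correctly listed. The paper closes exactly this case by noting $y-e_{1,1}\in C_\Delta$ has even sum, decomposing it into degree-two generators, observing that $y_{1,1}-1<\sum_j y_{2,j}$ forces at least one summand of the form $e_{2,j_1}+e_{2,j_2}$, and replacing that summand by $e_{1,1}+e_{2,j_1}+e_{2,j_2}$. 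You need this (or an equivalent) extra step.

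A smaller imprecision occurs in (iii): the inference ``every other pair of facets shares a nonzero point of $S_\Delta$, hence $\pi_J$ is the full simplex unless $F_1,F_{1,1}\in J$'' is not valid. First, for $b_2=2$ the pair $\{F_{2,1},F_{2,2}\}$ meets $S_\Delta$ only in the origin (there is no ``suitable $m$''). Second, acyclicity of $\pi_J$ is governed by \emph{all} subsets $I\subseteq J$, not just pairs: for $J=\{F_{2,1},\dots,F_{2,b_2}\}$ every proper subintersection is nonzero but the full one is $\{0\}$, so $\pi_J$ is the boundary of a $(b_2-1)$-simplex, with $\tilde H_{b_2-2}\neq 0$---not ``contractible with apex issues.'' You do land on the correct two exceptional $J$'s, but the clean statement to prove (as in the paper) is that $\bigcap_{F\in I}(S_\Delta\cap F)=\{0\}$ if and only if $I$ contains $\{F_1,F_{1,1}\}$ or all of the $F_{2,j}$, from which the shapes of all $\pi_J$ (simplices, simplices missing a facet, or unions of two simplices along a common facet) and the two non-acyclic cases follow.
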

\begin{proof}
The points $e_{2,j_1}+e_{2,j_2}$ are the only generators of $S_\Delta$ that lying on the facet $F_{1,1}$, therefore on this facet we have only the points with even sum of coordinates. Furthermore, from the proof of Lemma \ref{cone} we know that all points with even sum of coordinates in $C_\Delta\cap \Z^n$ also lie in $S_\Delta$. To prove (i) it remains to show that any point $x\in C_\Delta\cap \Z^n$ with odd sum of coordinates and $x_{1,1}>0$ is in $S_\Delta$.

It is easy to check that $x-e_{1,1}\in C_\Delta$ and therefore $x$ can be written as the sum of generators of $S_\Delta$ with the sum of coordinates equal to two. Since $x_{1,1}\le\sum_j x_{2,j}$, at least one of these generators must be in the form $e_{2,j_1}+e_{2,j_2}$. So we simply replace this generator by $e_{1,1}+e_{2,j_1}+e_{2,j_2}$ to write $x$ as the sum of generators of $S_\Delta$.

To prove (ii) we notice that we have $S_\Delta\subseteq S'_\Delta\subseteq C_\Delta\cap G_\Delta$. This, in fact, holds for any affine semigroup $S$. Thus, it is sufficient to show that for any point $x$ with $x_{1,1}=0$ and $2\nmid \sum x_{2,j}$ we have $x\notin S'_\Delta$. Indeed such a point $x\not\in S_\Delta'$ since $x\notin S_{1,1}$.

For part (iii) note that by Lemma \ref{cone-facets} we have $\mathcal F=\{F_{1,1},F_1\}\cup \{F_{2,j}: 1\le j\le b_2\}$. We claim that $\bigcap_{F\in I} (S_\Delta\cap F)=\{0\}$ if and only if $\{F_1,F_{1,1}\}\subseteq I$ or $\mathcal F\setminus \{F_1,F_{1,1}\}\subseteq I$. If $x\in S_\Delta\cap F_1\cap F_{1,1}$ then $x=0$, and hence $\bigcap_{F\in I} (S_\Delta\cap F)=\{0\}$. To prove the ``only if" part it is sufficient to consider maximal subset $I$ which does not contain two forbidden sets:

\begin{itemize}
\item For $I=\mathcal F\setminus\{F_{1,1},F_{2,j_0}\}$ we have $e_{1,1}+e_{2,j_0}\in \bigcap_{F\in I} (S_\Delta\cap F)$,
\item for $I=\mathcal F \setminus \{F_1,F_{2,j_0}\}$, we have  $2e_{2,j_0}\in \bigcap_{F\in I} (S_\Delta\cap F)$.
\end{itemize}
From this statement it follows that for $J=\{F_1,F_{1,1}\}$ or $J=\mathcal F\setminus \{F_1,F_{1,1}\}$ $\pi_J$ is not acyclic, because we have $\tilde{H_0}(\pi_J;\C)\cong\C$ or $\tilde H_{b_2-2}(\pi_J;\C)\cong\C$ respectively.
Moreover, it is straightforward to check that for any other set $J$ the complex $\pi_J$ is either a simplex, a union of two simplices with a common facet or a simplex without a facet, which are all acyclic.

For the part $(iv)$, we first consider the case $J=\{F_1,F_{1,1}\}$. Suppose on contrary that $x\in G_J$. Then the condition $x\in S_{2,j}$ for every $1\leq j\leq b_2$ implies that $x_{2,j}\ge 0$ for every $1\leq j\le b_2$. If now $x_{1,1}>0$ or $x_{1,1}=0$ and $2\mid \sum_jx_{2,j}$, then $x\in S_{1,1}$ which is not possible by the definition of $G_\Delta$. So $x_{1,1}< 0$ or $x_{1,1}=0$ and $2\nmid \sum_jx_{2,j}$. As $x\not\in S_1$, therefore $x_{1,1}> \sum_jx_{2,j}$ and hence $x_{2,j}<0$ for some $j$, which is a contradiction.
Now we consider the case $J=\mathcal F\setminus \{F_1,F_{1,1}\}$. Again we suppose on contrary that $x\in G_J$. Therefore $x_{2,j}<0$ for all $1\leq j\le b_2$. On the other hand, from $x\in S_{1,1}$ we get that $x_{1,1}\ge 0$, and hence $x_{1,1}-\sum_j x_{2,j}>0$. The last inequality implies that $x\notin S_1$ which is a contradiction. This completes the proof.
\end{proof}

The following lemma is about the non smoothness of $T_\Delta$.
\begin{lema}\label{nonsmooth}
\begin{itemize}
	\item [(i)] If $G_\Delta=\Z^n$, then $T_\Delta$ is not smooth.
	
	\item [(ii)] If $k=2$, $\mathbf a=(1,1)$, $b_i>1$ for all $i=1,2$, then $T_\Delta$ is not smooth.
	
	\item [(iii)] If $k=1$, $a=2$ and $b>1$, then $T_\Delta$ is not smooth.
\end{itemize}

\end{lema}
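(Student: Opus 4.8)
The plan is to reduce all three statements to counting minimal generators of $S_\Delta$. Since $S_\Delta\subseteq\N^n$, the cone $C_\Delta$ is pointed, so $S_\Delta$ is a pointed affine semigroup; the coordinate-sum functional $x\mapsto\sum_{i,j}x_{i,j}$ is positive on $S_\Delta\setminus\{0\}$ (it is $\geq 2$ there by the defining condition), hence $\C[S_\Delta]=\C\oplus\mathfrak m$ with $\mathfrak m=(\chi^s:s\in S_\Delta\setminus\{0\})$ a maximal ideal with residue field $\C$; it corresponds to the distinguished point $p\in T_\Delta$ at which all non-constant characters vanish. Then $\mathfrak m^2$ is spanned by $\chi^s$ for $s$ a sum of two nonzero elements of $S_\Delta$, so $\mathfrak m/\mathfrak m^2$ has $\C$-basis indexed by the Hilbert basis $H$ of $S_\Delta$ (the elements of $S_\Delta\setminus\{0\}$ that are not such a sum). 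Hence the Zariski tangent space of $T_\Delta$ at $p$ has dimension $|H|$, while $\dim T_\Delta=\rank_\Z G_\Delta=:r$; therefore $T_\Delta$ is singular at $p$ whenever $|H|>r$. The elementary tool we use repeatedly is that every nonzero element of $S_\Delta$ has coordinate sum $\geq 2$, so any element of $S_\Delta$ of coordinate sum $2$ or $3$ is automatically in $H$ (splitting it into two nonzero summands would force coordinate sum $\geq 4$). So in each case it suffices to exhibit more than $r$ elements of $S_\Delta$ of coordinate sum $2$ or $3$.

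For case (ii), $k=2$, $\mathbf a=(1,1)$, $b_1,b_2>1$: here (Lemma~\ref{group}) $r=n-1=b_1+b_2-1$, and $S_\Delta$ is generated precisely by the $b_1b_2$ vectors $e_{1,j}+e_{2,j'}$, all minimal. Since $b_1b_2-(b_1+b_2-1)=(b_1-1)(b_2-1)>0$, we get $|H|=b_1b_2>r$, so $T_\Delta$ is not smooth. For case (iii), $k=1$, $a=2$, $b>1$: here $r=b$ and $S_\Delta$ is generated by the $\binom{b+1}{2}$ minimal vectors $e_{1,j}+e_{1,j'}$ with $1\leq j\leq j'\leq b$; since $\binom{b+1}{2}>b$ for $b\geq 2$, again $|H|>r$.

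For case (i), $G_\Delta=\Z^n$, hence $r=n$, and by Lemma~\ref{group} we are in one of: $k\geq 3$; $k=2$ with $\mathbf a\neq(1,1)$ (so $a_2\geq 2$ as $a_1\leq a_2$); or $k=1$ with $a\geq 3$. I will produce at least $n$ minimal generators of coordinate sum $2$ plus one of coordinate sum $3$. A coordinate-sum-$3$ minimal generator always exists: take $3e_{1,1}$ if $k=1$, or if $k=2$ with $a_2\geq 3$; take $2e_{1,1}+e_{2,1}$ if $\mathbf a=(2,2)$; take $e_{1,1}+2e_{2,1}$ if $\mathbf a=(1,2)$; take $e_{1,1}+e_{2,1}+e_{3,1}$ if $k\geq 3$. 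For coordinate-sum-$2$ generators: if $k=1$ the vectors $2e_{1,j}$ ($1\leq j\leq b$) give $b=n$ of them; if $k=2$ the $b_1b_2$ vectors $e_{1,j}+e_{2,j'}$ together with the $b_2$ vectors $2e_{2,j'}$ (valid since $a_2\geq 2$) give $b_1b_2+b_2\geq b_1+b_2=n$, using $b_1(b_2-1)\geq 0$; if $k\geq 3$ the cross vectors $e_{i,j}+e_{i',j'}$, $i<i'$, number $\sum_{i<i'}b_ib_{i'}=n+\tfrac12\sum_i b_i\bigl(\sum_{i'\neq i}b_{i'}-2\bigr)\geq n$ because $\sum_{i'\neq i}b_{i'}\geq k-1\geq 2$. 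In every case $|H|\geq n+1>r$, so $T_\Delta$ is not smooth.

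The bookkeeping above is routine; the only place requiring genuine care is case (i) with $k=3$ and $b_1=b_2=b_3=1$, where the coordinate-sum-$2$ generators number exactly $n=3$ and one really needs the extra generator $e_{1,1}+e_{2,1}+e_{3,1}$ to push $|H|$ past $r$. The main conceptual step is the reduction to counting minimal generators via the tangent space at $p$; after that the argument is entirely elementary.
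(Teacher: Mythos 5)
Your argument is correct and reaches the conclusion by a genuinely different route from the paper. The paper invokes the toric smoothness criterion (smooth iff $S_\Delta=C_\Delta\cap G_\Delta$ and the ray generators of $C_\Delta$ form a basis of $G_\Delta$): for (i) it splits into the case where $S_\Delta$ has holes (Lemma~\ref{holes}, so not saturated) and the case $a_i=1$ for all $i$, where a parity argument shows the rays lie in the index-two sublattice of even coordinate sum; for (ii) it compares the number $b_1b_2$ of ray generators with $\dim G_\Delta=b_1+b_2-1$; for (iii) it observes that $2e_{1,1},\dots,2e_{1,b}$ cannot be a basis of $G_\Delta$. You instead compute the Zariski tangent space at the distinguished (cone) point $p$ directly: $\dim_\C\mathfrak m/\mathfrak m^2=|H|$ for the Hilbert basis $H$, and exhibit $|H|>\rank_\Z G_\Delta$ in every case, exploiting that all elements of coordinate sum $2$ or $3$ are automatically minimal. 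This buys a single uniform, elementary criterion that needs neither saturation nor any sublattice/parity analysis (it handles the normal cases (ii) and (iii) and the non-normal cases of (i) identically), at the cost of a slightly longer case-by-case count; the paper's version is shorter where the ray count alone suffices. One slip to fix: in case (i) with $k=2$ and $a_2\ge 3$ your coordinate-sum-$3$ generator $3e_{1,1}$ need not lie in $S_\Delta$ when $a_1<3$ (e.g.\ $\mathbf a=(1,3)$ violates $\sum_j x_{1,j}\le a_1$); take $3e_{2,1}$ instead, which always works there. With that correction the bookkeeping checks out, including the tight case $k=3$, $\mathbf b=(1,1,1)$ that you rightly single out.
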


\begin{proof}
We know that an affine toric variety is smooth if and only if it is of the form $\Spec \C[C\cap G]$, where $C$ is the cone in the lattice $G$, and the rays of the cone $C$ form a basis of the lattice $G$ \cite[Theorem 1.3.12.]{cox2011toric}. 

For (i), in our setting, if $G_\Delta=\Z^n$ we have that $T_\Delta$ is smooth if the ray generators of $C_\Delta$ form a basis and $C_\Delta\cap \Z^n = S_\Delta$, which is not possible unless $a_i=1$ for all $i$, cf. Lemma~\ref{holes}. On the other hand, in this case all generators of $S_\Delta$ and therefore all ray generators of $C_\Delta$ are of the form $e_{i_1,j_1}+e_{i_2,j_2}$, where $i_1\not=i_2$. These generators lie in the sublattice of all points with even sum of coordinates and therefore can not form a basis of $\Z^n$. This finishes the proof of (i).

For (ii), note that all points of the form $e_{1,j_1}+e_{2,j_2}$ are generators of $S_\Delta$ and $G_\Delta$ as well as are ray generators of $C_\Delta$, so $C_\Delta \cap G_\Delta = S_\Delta$ and $\dim G_\Delta = b_1+b_2-1$. On the other hand, there are $b_1b_2$ points of form $e_{1,j_1}+e_{2,j_2}$, which satisfy $b_1b_2>b_1+b_2-1$, unless $b_1$ or $b_2$ equals 1. Therefore $T_\Delta$ can be smooth only if $b_1=1$ or $b_2=1$. This finishes the proof of (ii).

For (iii), note that $S_\Delta=\{x\in \N^b : 2\mid \sum_j{x_{1,j}}\}$, $G_\Delta=\{x\in \Z^b : 2\mid \sum_j{x_{1,j}}\}$ and hence $C_\Delta\cap G_\Delta = S_\Delta$. Further $2e_{1,j}$ are ray generators of $C_\Delta$ which are a basis of $G_\Delta$ only if $b=1$. This finishes the proof of (iii).
\end{proof}

Finally, we have the main result of this article.

\begin{thm}\label{cohen-macaulay-goren}
	The tangential variety of the Segre--Veronese variety is smooth if and only if
	\begin{itemize}
		\item [(S1)] $k=2$, $\mathbf a=(1,1)$, $b_1=1$, or
		\item [(S2)] $k=1$, $a=1$ or ($a=2$ and $b=1$).
	\end{itemize}
	
	If the tangential variety of the Segre--Veronese variety is not smooth, then it is Cohen--Macaulay if and only if one of the following holds
	\begin{itemize}
		\item [(CM1)] $k\geq3$, $\mathbf a=(1,\ldots,1)$,
	
		\item [(CM2)] $k=2$, $\mathbf a=(2,2)$, $\mathbf b=(1,1)$,
		
		\item [(CM3)] $k=2$, $\mathbf a=(1,2)$, $\mathbf b=(1,b_2)$ for all $b_2\geq 1$,
		
		\item [(CM4)] $k=2$, $\mathbf a=(1,1)$, $b_i>1$ for all $i=1,2$,
		
		\item [(CM5)] $k=1$, $a\geq3$, $b=1$,
		
		\item [(CM6)] $k=1$, $a=2$, $b>1$.
	\end{itemize}

	If the tangent variety of the Segre--Veronese variety is not smooth, then it is Gorenstein if and only if one of the following holds
	\begin{itemize}
		\item [(G1)] $k=3$, $\mathbf a=(1,1,1)$, $\mathbf b=(1,1,1)$,
		
		\item [(G2)] $k=2$, $\mathbf a=(1,2)$, $\mathbf b=(1,1)$,
		
		\item [(G3)] $k=2$, $\mathbf a=(1,1)$, $b_1=b_2$, $b_1>1$.
		
		\item [(G4)] $k=1$, $a\geq3$, $b=1$,
		
		\item [(G5)] $k=1$, $a=2$, $b$ is even.
	\end{itemize}
\end{thm}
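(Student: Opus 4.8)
By Proposition~\ref{toric covering} the variety $\tau(X)$ is covered by open subsets isomorphic to $\C^n\times T_\Delta$, where $T_\Delta=\Spec\C[S_\Delta]$; since smoothness, Cohen--Macaulayness and Gorensteinness are local and are insensitive to the affine-space factor, in each case it is equivalent to decide the corresponding property for $T_\Delta$. To do so I will combine the smoothness criterion for affine toric varieties \cite[Theorem 1.3.12]{cox2011toric} with the Hoa--Trung criterion (Theorem~\ref{hoa-trung}), feeding in the structural facts already prepared: the group $G_\Delta$ (Lemma~\ref{group}), the cone $C_\Delta$ and its facets (Lemmas~\ref{cone} and~\ref{cone-facets}), the holes of $S_\Delta$ (Lemma~\ref{holes}), the analysis of condition~(i) of Theorem~\ref{hoa-trung} (Lemmas~\ref{not-cm} and~\ref{a=(1,2)}), and the non-smoothness statements (Lemma~\ref{nonsmooth}).

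\textbf{Smoothness.} By Lemma~\ref{group}, $G_\Delta=\Z^n$ unless $k=2,\mathbf a=(1,1)$, or $k=1,a=2$, or $k=1,a=1$, and whenever $G_\Delta=\Z^n$ Lemma~\ref{nonsmooth}(i) shows $T_\Delta$ is singular. In the family $k=2,\mathbf a=(1,1)$, Lemma~\ref{nonsmooth}(ii) leaves open only the subcase in which some $b_i=1$; in the family $k=1,a=2$, Lemma~\ref{nonsmooth}(iii) leaves open only $b=1$. It then remains to confirm that the surviving cases $k=2,\mathbf a=(1,1),b_1=1$, $k=1,a=2,b=1$, and $k=1,a=1$ are genuinely smooth: in each one the ray generators of $C_\Delta$ can be listed explicitly, and one checks directly that they form a $\Z$-basis of $G_\Delta$ and that $C_\Delta\cap G_\Delta=S_\Delta$, so \cite[Theorem 1.3.12]{cox2011toric} applies. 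This produces exactly (S1) and (S2).

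\textbf{Cohen--Macaulay.} Assume $T_\Delta$ is not smooth. If $\mathbf a=(1,\dots,1)$ then $S_\Delta$ is a normal affine semigroup (cf.\ \cite[Proposition 8.5]{moz}), so $\C[S_\Delta]$ is Cohen--Macaulay by Hochster's theorem (for $k=2$ one may alternatively invoke the classical theory of generic determinantal varieties, since there $\tau(X)$ is the locus of matrices of rank $\le2$, cf.\ Remark~\ref{literature}); non-smoothness together with Lemmas~\ref{group} and~\ref{nonsmooth} then forces $k\ge3$, which is (CM1), or $k=2$ with both $b_i>1$, which is (CM4). If $\mathbf a\ne(1,\dots,1)$, then $a_k\ge2$ and Lemma~\ref{not-cm} gives $S'_\Delta\ne S_\Delta$, so $\C[S_\Delta]$ is not Cohen--Macaulay by Theorem~\ref{hoa-trung}(i), except when $(\mathbf a,\mathbf b)$ lies in one of the families $k=2,\mathbf a=(2,2),\mathbf b=(1,1)$; $k=2,\mathbf a=(1,2),b_1=1$; $k=1,a=2$; $k=1,a\ge3,b=1$. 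In each of these the Hoa--Trung conditions do hold: by normality for $k=1,a=2,b>1$ (cf.\ Remark~\ref{literature}); via $S_\Delta=\N\setminus\{1\}$ as in Example~\ref{examples}(5) for $k=1,a\ge3,b=1$; by the verification in Example~\ref{examples}(1) for $k=2,\mathbf a=(2,2),\mathbf b=(1,1)$; and by parts (ii)--(iv) of Lemma~\ref{a=(1,2)} (with Example~\ref{examples}(3) covering $b_2=1$) for $k=2,\mathbf a=(1,2),b_1=1$. Intersecting with the non-smooth locus (Lemma~\ref{nonsmooth}) leaves precisely (CM2), (CM3), (CM5) and (CM6).

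\textbf{Gorenstein, and the main difficulty.} Since Gorenstein implies Cohen--Macaulay it suffices to run through (CM1)--(CM6); in all of these condition~(ii) of Theorem~\ref{hoa-trung} has already been verified (automatically in the normal cases, vacuously for (CM5), and explicitly in Example~\ref{examples}(1) and Lemma~\ref{a=(1,2)} otherwise), so $\C[S_\Delta]$ is Gorenstein iff $G_{\mathcal F}=x-S_\Delta$ for some $x\in G_\Delta$. Computing $G_{\mathcal F}$ family by family: one finds $G_{\mathcal F}=1-S_\Delta$ for $k=1,a\ge3,b=1$ and $G_{\mathcal F}=(0,-1)-S_\Delta$ for $k=2,\mathbf a=(1,2),\mathbf b=(1,1)$ (Example~\ref{examples}(5),(3)), so these are Gorenstein, giving (G4) and (G2); for $k=1,a=2,b>1$ one has $G_{\mathcal F}=\{x\in G_\Delta~:~x_{1,j}<0\ \text{for all }j\}$, which equals $(-1,\dots,-1)-S_\Delta$ exactly when $(-1,\dots,-1)\in G_\Delta$, i.e.\ when $b$ is even (and no translate is possible when $b$ is odd), giving (G5); for $k=2,\mathbf a=(1,1)$ with $b_1,b_2>1$, where $\tau(X)$ is the locus of rank-$\le2$ matrices of size $(b_1{+}1)\times(b_2{+}1)$, Gorensteinness holds iff the matrix is square, i.e.\ $b_1=b_2$ (cf.\ Remark~\ref{literature}), which one can also see from the fact that $G_{\mathcal F}$ is a translate of $-S_\Delta$ iff $b_1=b_2$, giving (G3); Example~\ref{examples}(1) shows $k=2,\mathbf a=(2,2),\mathbf b=(1,1)$ is not Gorenstein; and for the remaining Segre families $\mathbf a=(1,\dots,1)$, $G_{\mathcal F}$ is a translate of $-S_\Delta$ only for $k=3,\mathbf b=(1,1,1)$, where $\tau(X)$ is the $2{\times}2{\times}2$ hyperdeterminant hypersurface and hence automatically Gorenstein (cf.\ \cite[Theorem 8.9]{moz}), while $k=2,\mathbf a=(1,2),b_1=1,b_2\ge2$ fails. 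This isolates (G1) and rules out everything else. The hard part is precisely this last step --- the Gorenstein computations, and above all the \emph{negative} ones (that $G_{\mathcal F}$ is not a translate of $-S_\Delta$), which demand an explicit enough description of the corner region $G_{\mathcal F}\subseteq G_\Delta$ to exhibit an obstruction, and it is here that the non-normality of $S_\Delta$ makes the combinatorics delicate.
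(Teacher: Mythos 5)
Your overall architecture is the same as the paper's: reduce to $T_\Delta$ via Proposition~\ref{toric covering}, settle smoothness with Lemmas~\ref{group} and~\ref{nonsmooth} plus a direct check of the surviving cases, use Lemma~\ref{not-cm} to cut the Cohen--Macaulay candidates down to the five families, and verify Hoa--Trung for each via normality, Example~\ref{examples} and Lemma~\ref{a=(1,2)}. The smoothness and Cohen--Macaulay parts are essentially complete and match the paper (the paper verifies Hoa--Trung directly where you invoke normality plus Hochster, but both are fine).

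The gap is in the Gorenstein section, and you have in effect flagged it yourself. The decisive \emph{negative} statements --- that for $\mathbf a=(1,\dots,1)$, $k\ge 3$ the set $G_{\mathcal F}$ is a translate of $-S_\Delta$ only when $k=3$ and $\mathbf b=(1,1,1)$; that $k=2$, $\mathbf a=(1,2)$, $b_1=1$, $b_2\ge 2$ fails; and that for $\mathbf a=(1,1)$ one needs $b_1=b_2$ --- are asserted, not proved, and these are exactly the content of the paper's proof. The missing idea is the paper's reduction of ``$G_{\mathcal F}=x_0-S_\Delta$ for some $x_0$'' to a checkable finite condition: since $0$ is the unique minimal element of $S_\Delta$, $x_0$ must be the unique maximal element of $G_{\mathcal F}$, i.e.\ (in the normal cases) $-x_0$ must be the \emph{unique} interior lattice point of $C_\Delta$ of smallest coordinate sum. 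One then observes that all sum-two points $e_{i_1,j_1}+e_{i_2,j_2}$ lie on a facet $F_{i_1}$, that $e_{1,1}+e_{2,1}+e_{3,1}$ is interior, and that uniqueness fails as soon as $k\ge 4$ or some $b_i\ge 2$, which pins down (G1); the analogous argument with candidate $(1,\dots,1)$ forces $b_1=b_2$ for (G3); and for $\mathbf a=(1,2)$, $b_2\ge2$ the unique candidate $x_0=(0,-1,\dots,-1)$ is refuted by exhibiting $y=(-1,\dots,-1)\in G_{\mathcal F}$ with $x_0-y=e_{1,1}\notin S_\Delta$. Without some version of this maximal-element argument (or the explicit counterexamples it produces), the classification (G1)--(G5) is not established; citing the determinantal and hyperdeterminantal literature covers (G3) and (G1) in the positive direction, but not the exclusion of all the other Segre parameters.
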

\begin{proof}
It is sufficient to study the toric variety $T_\Delta=\Spec \C[S_\Delta]$, cf. Proposition~\ref{toric covering}. By applying Lemma~\ref{not-cm} the following are the only possible candidates for $\C[S_\Delta]$ to be Cohen--Macaulay and to be Gorenstein:
\begin{itemize}
\item [(i)] $a_i=1$ for all $1\leq i\leq k$,

\item [(ii)]$k=2$, $\mathbf a=(2,2)$, $\mathbf b= (1,1)$,

\item [(iii)] $k=2$, $\mathbf a=(1,2)$, $b_1=1$,

\item [(iv)] $k=1$, $a\geq 3$, $b=1$,

\item [(v)] $k=1$, $a=2$.

\end{itemize}
The candidates $(ii,iv)$ can be resolved by using Example~\ref{examples}(1) and Example~\ref{examples}(5) respectively, showing (CM2), (CM5), and (G4). For $k=1, a=1$, we have $S_\Delta=\{0\}$, which shows a part of (S2).

Now we consider candidate $(iii)$. The case $b_2=1$ is resolved by Example \ref{examples}(3), showing (G2) and a part of (CM3). Assume $b_2\ge 2$. By Lemma~\ref{a=(1,2)} both conditions from Theorem~\ref{hoa-trung} are satisfied, showing (CM3). Suppose that $T_\Delta$ is also Gorenstein. Then there exists $x_0\in\Z^n$ such that $G_\mathcal{F}=x_0-S_\Delta$. Since in $S_\Delta$ the point 0 has the minimal possible value in every coordinate of all points in $S_\Delta$, $x_0$ must be a point in $G_\mathcal{F}$ which has maximal value in every coordinate. Thus, $x_0=(0,-1,-1,\dots,-1)$ since clearly for all points $x\in\Z^n$, $x_{2,j}\ge 0$ implies $x\in S_{2,j}$ and $x_{1,1}>0$ implies $x\in S_{1,1}$. Note that the $(1,1)$-coordinate must be positive since there are holes in $S_\Delta$. However, we also have $y=(-1,\dots,-1)\in G_\mathcal{F}$ and $x_0-y=e_{1,1}\notin S_\Delta$ which is a contradiction.

We now consider candidate $(v)$. Note that in this case $S_\Delta=\{x\in \N^b : 2\mid \sum_j{x_{1,j}}\}$, $G_\Delta=\{x\in \Z^b : 2\mid \sum_j{x_{1,j}}\}$ and $F_{1,j}$ is a facet of $C_\Delta$ for all $1\leq j\leq b$. If $b=1$, then $S_\Delta\cong \N$ which gives part of (S2), hence completes (S2), cf. Lemma~\ref{nonsmooth}(iii). If $b>1$, then $S_{1,j}=\{x\in G_\Delta : x_{1,j}\geq 0\}$ for any $1\leq j\leq b$ and hence $S'_\Delta=\cap_jS_{1,j}=S_\Delta$, and $G_{\mathcal F}=\{x\in G_\Delta : x_{1,j}<0 \text{ for all } 1\leq j\leq b\}$. Now an element $x\in G_\Delta$ satisfying $G_{\mathcal F}=x-S_\Delta$ must be equal to $(-1,\dots,-1)$, which is only possible when $k$ is even. Moreover, for any proper subset $I$ of $\mathcal F$, $\pi_I$ is a simplex over $I$ which is well-known to be acyclic. This shows (CM6) and (G5).

Finally, we consider the candidate $(i)$. First assume that $k\ge 3$. In this case we have $S_\Delta=C_\Delta\cap G_\Delta$ so $T_\Delta$ is normal and therefore Cohen--Macaulay, showing (CM1).
Suppose it is also Gorenstein. That is, there exists some $x_0\in G_\Delta$ such that $G_{\mathcal F}=x_0-S_\Delta$. Since in $S_\Delta$ there is a unique point with the smallest sum of coordinates, therefore there is a unique point in $G_{\mathcal F}$ with the largest sum of coordinates. On the other hand, we have $$G_{\mathcal F}=\{x\in \Z^n|-x\in C_\Delta\setminus(\cup_i F_{i})\cup(\cup_{i,j} F_{i,j})\},$$ so this implies that there exists a unique lattice point in the interior of $C_\Delta$ with the smallest sum of coordinates. However, there is no point in $S_\Delta$ with the sum of coordinates one, and all point with the sum of coordinates two are of the form $e_{i_1,j_1}+e_{i_2,j_2}$ with $i_1\neq i_2$ which lie on $F_{i_1}$.

Further, by checking all inequalities we can see that the point $e_{1,1}+e_{2,1}+e_{3,1}$ is in the interior of $C_\Delta$ so it must be the unique lattice point with the sum of coordinates equal to three. However, if $k\ge 4$ then also the point $e_{1,1}+e_{2,1}+e_{4,1}$ is there and if $b_1\ge 2$ also the point $e_{1,2}+e_{2,1}+e_{3,1}$ is the interior lattice point in $C_\Delta$. This implies $k=3$ and $\mathbf b=(1,1,1)$. In this case, the point $(-1,-1,-1)$ has lattice distance one from all facets of $C_\Delta$ and therefore $G_{\mathcal F}=(-1,-1,-1)-S_\Delta$. Condition (ii) from Theorem \ref{hoa-trung} must be satisfied since we know that $T_\Delta$ is Cohen--Macaulay. This shows (G1).

Now we consider the case $\mathbf{a}=(1,1)$. We have $G_\Delta=\{x\in\Z^n;\sum x_{1,j}=\sum x_{2,j}\}\neq \Z^n$ and $S_\Delta=C_\Delta\cap G_\Delta$, so again $T_\Delta$ is normal, showing (CM4). If $b_1=1$, we have $S_\Delta=\{x\in \Z^n\vert x_{1,1}=\sum_j x_{2,j}\}$ and therefore $S_\Delta\cong \N^{b_2}$, which together with Lemma~\ref{nonsmooth}(ii) shows (S1).

Assume that $b_1>1$ and suppose that $T_\Delta$ is Gorenstein. As in the previous cases, this implies that there is a unique interior lattice point in $C_\Delta$ with the smallest sum of coordinates. The only candidate is the point $x=(1,1,\dots,1)$. From $x\in C_\Delta$ we have $b_1=\sum_j x_{1,j}=\sum x_{2,j}=b_2$. On the other hand, it is easy to check that in this case $G_{\mathcal F}=x-S_\Delta$. Moreover, condition (ii) from Theorem \ref{hoa-trung} holds since $T_\Delta$ is Cohen--Macaulay, showing (G3). This completes the proof. 

\end{proof}

We conclude the article with the following remark and a question thereafter.

\begin{rem}\label{literature}
		(1) The cases (S1) and (S2) are precisely those cases where the tangential variety $\tau(X)$ is the whole projective space $\P^N$.
		
		(2) For the case when $\Delta$ is a graph, that is, when the dimension of each of its simplices is at most one, then the tangential variety of $V_\Delta$ coincides with the secant variety of $V_\Delta$. In our setting, this occurs in the following two cases:
		\begin{itemize}
			\item [(i)] $k=1$, $a=2$ (special Veronese case), and
			
			\item [(ii)] $k=2$, $\mathbf{a} = (1,1)$ (Segre Case).
		\end{itemize}
		Therefore, in both of these cases, Cohen--Macaulay tangential varieties were classically known (as they are normal).
		On the other hand Gorenstein classification in these cases follows from \cite[Theorem 4.4]{khadam2020secant} (although it was known earlier as well, see \cite[Remark 4.5]{khadam2020secant} for references therein).
		
		(3) $(G1)$ was also proved in \cite[Theorem 8.9]{moz}.
\end{rem}

In this article we are able to completely classify those tangential varieties of the Segre-Veronese varieties which are Cohen-Macaulay and/or Gorenstein, that is, every local ring of every point in the projective variety is Cohen-Macaulay and/or Gorenstein respectively. However, there is also a different, but stronger property, named as arithmetically Cohen-Macaulay (respectively arithmetically Gorenstein), which asks when the ring is Cohen-Macaulay (respectively Gorenstein) after localizing at the zero point of the affine cone. The problem of classification of arithmetically Cohen-Macaulay and/or Gorenstein tangential varieties remains open.

\begin{problem}
	Classify tangential varieties of the Segre-Veronese varieties which are arithmetically Cohen-Macaulay and/or Gorenstein.
\end{problem}

\bibliographystyle{siam}
\bibliography{tan-var}

\end{document}